\providecommand{\U}[1]{\protect\rule{.1in}{.1in}}
\newcounter{subeqn} 
\newtheorem{theorem}{Theorem}
\newtheorem{lemma}{Lemma}
\newtheorem{remark}{Remark}
\begin{document}
	
	\title{On the stability of a double porous elastic system with visco-porous dampings}
	\author{Ahmed KEDDI$^{(1)}$, Aicha NEMSI$^{2}$ \& Abdelfeteh FAREH$^{(2)}$\\{\small $^{1}$ Laboratory of Mathematics, Modelisation and Application
			(LAMMA)}\\{\small Department of Mathematics and computer sciences, University of Adrar,
			Adrar, Algeria} \\
	{\small $^{2}$ Laboratory of operators theory and PDEs: Foundations and applications}
			\\{\small {Faculty of Exact sciences, University of El Oued,
				P.B. 789, El Oued 39000, Algeria.}}}
	\date{}
	\maketitle
	 \emph{2020 Mathematics Subject Classification}: 35B35; 35B40; 35L15; 35Q74; 74F10; 93D05\\

\emph{Key words and phrases}: double porosity, well-posedness, exponential decay, lack of exponential decay.





\date{}

\maketitle

\renewcommand{\thefootnote}{}

\footnote{emails:~ahmedkeddi@univ-adrar.dz~aichanemsi@gmail.com~ \\
	Corresponding authors:farehabdelf@gmail.com. }
%

\renewcommand{\thefootnote}{\arabic{footnote}}
\setcounter{footnote}{0}


\begin{abstract}
	In this paper we consider a one dimensional elastic system with double porosity structure and with frictional damping in both porous equations. We introduce two stability numbers $\chi_{0}$ and $\chi_{1}$ and prove that the solution of the system decays exponentially provided that $\chi_{0}=0$ and $\chi_{1}\neq0.$ Otherwise,  we prove the lack of exponential decay.
Our results improve the results of \cite{Bazarra} and \cite{Nemsi}.

\end{abstract}

\section{Introduction}

In this paper, we are concerned with the following system%
\begin{equation}
	\left\{
	\begin{array}
		[c]{lc}%
		\rho u_{tt}=\mu u_{xx}+b\varphi_{x}+d\psi_{x}& \mbox{in}~(0,\pi)\times\mathbb{R}_+,\\
		\kappa_{1}\varphi_{tt}=\alpha\varphi_{xx}+\beta\psi_{xx}-bu_{x}-\alpha
		_{1}\varphi-\alpha_{3}\psi-\tau_{1}\varphi_{t}-\tau_{2}\psi_{t} &
		\mbox{in}~(0,\pi)\times\mathbb{R}_+,\\
		\kappa_{2}\psi_{tt}=\beta\varphi_{xx}+\gamma\psi_{xx}-du_{x}-\alpha_{3}%
		\varphi-\alpha_{2}\psi-\tau_{3}\varphi_{t}-\tau_{4}\psi_{t} &
		\mbox{in}~(0,\pi)\times\mathbb{R}_+,
	\end{array}
	\right.  \label{A}%
\end{equation}
where $u$ is the transversal displacement of a one-dimensional porous elastic
solid of length $\pi$, $\varphi$ and $\psi$ are the porous unknown
functions one associated to the pores in the skeleton and the other associated
with the fissures in the material body. The parameter $\rho,\kappa_{1}$ and
$\kappa_{2},$ which assumed to be strictly positive, are the mass density, and
the products of the mass density by the equilibrated inertia, respectively.
The coefficients $\mu,\alpha,\beta,\gamma,\alpha_{1},\alpha_{2},\alpha
_{3},b,d,\tau_{1}$,\\$\tau_{2},\tau_{3}$ and $\tau_{4}$ are parameters 
related on the properties of the material. We assume that they satisfy some
restrictions that will be specified later.

The system considered here, represented an elastic solid with double porosity
structure in the framework of the theory of elastic materials with voids
developed by Nunziato-Cowin \cite{Cowin}. This approach has been used by
Ie\c{s}an and Quintanilla \cite{Quintanilla Iesan} to derive a new theory of
thermoelastic solids which have a double porosity structure. In contrast to
the classical theory the new one is not based on Darcy's law, and the porosity
structure in the case of equilibrium is influenced by the displacement field.

The origin of the classical theory of elastic materials with
double porosity goes back to the works of Barenblatt \emph{et al.}
\cite{Barenblatt1,Barenblatt2}. The authors introduced two liquid pressures at
each point of the material which allows the body to have a double porosity
structure: a macro porosity connected to pores in the body and a micro
porosity connected to fissures in the skeleton.

In the last few years, a great interest has been given to the analysis of the
longtime behavior of solutions of porous thermoelastic problems. A part of
this interest stems from the need to have general results that explain the
experimental observations of engineers. The earliest contribution in this
direction was achieved by Quintanilla \cite{Quintanilla}. He considered the
porous elastic system
\begin{equation}\label{Q}
	\left\{
	\begin{array}
		[c]{lc}%
		\rho_{0}u_{tt}=\mu u_{xx}+\beta\varphi_{x} & \mbox{in}~~(0,\pi)\times
		(0,+\infty),\\
		\rho_{0}\kappa\varphi_{tt}=\alpha\varphi_{xx}-\beta u_{x}-\xi\varphi
		-\tau\varphi_{t} & \mbox{in}~~(0,\pi)\times(0,+\infty),
	\end{array}
	\right.
\end{equation}
where $u$ is the transversal displacement and $\varphi$ is the volume
fraction. He used Hurwitz theorem and showed that the porous dissipation
$\tau\varphi_{t}$ is not powerful enough to produce an exponential stability.

Several dissipative mechanisms have been examined to stabilize system (\ref{Q}) exponentially. Casas and Quintanilla \cite{Casas2} coupled system (\ref{A}) (for $\tau=0$)
with the heat equation, and proved the non-exponential stability. However, if thermal and porous dissipations or micro-thermal and viscoelastic dissipations are combined then the solution decays exponentially \cite{Casas1,Magana Quintanilla}.

Apalara \cite{Apalara} considered the porous thermoelastic system%
\[
\left\{
\begin{array}
	[c]{ll}%
	\rho u_{tt}-\mu u_{xx}-b\phi_{x}=0, & in\;\left(  0,1\right)  \times\left(
	0,+\infty\right) \\
	J\phi_{tt}-\delta\phi_{xx}+bu_{x}+\xi\phi+\tau\phi_{t}=0, & in\;\left(
	0,1\right)  \times\left(  0,+\infty\right)  ,
\end{array}
\right.
\]
with different boundary conditions. He investigated the case of equal wave speeds $\dfrac{\mu}{\rho}=\dfrac{\delta}{J}$ and proved that the unique
dissipation in the porous equation leads to an exponential stability. He also replaced the frictional damping $\tau\phi_t$ by the memory term $\displaystyle\int_0^tg(t-s)\phi_{xx} (s)ds$ and obtained a general rate of decay \cite{Apalara1}. We notice that the results of \cite{Apalara,Apalara1} disprove Maga\~{n}a and Quintanilla's  claim that a
porous-elastic system with a single dissipation mechanism can not be exponentially stable \cite{Magana Quintanilla}.

In the context of double porous thermoelasticity, Bazarra \emph{et al.}
\cite{Bazarra} considered the system%
\begin{equation}
	\left\{
	\begin{array}
		[c]{l}%
		\rho u_{tt}=\mu u_{xx}+b\varphi_{x}+d\psi_{x}-\beta\theta_{x},\\
		\kappa_{1}\varphi_{tt}=\alpha\varphi_{xx}+b_{1}\psi_{xx}-bu_{x}-\alpha
		_{1}\varphi-\alpha_{3}\psi+\gamma_{1}\theta-\varepsilon_{1}\varphi
		_{t}-\varepsilon_{2}\psi_{t},\\
		\kappa_{2}\psi_{tt}=b_{1}\varphi_{xx}+\gamma\psi_{xx}-du_{x}-\alpha_{3}%
		\varphi-\alpha_{2}\psi+\gamma_{2}\theta-\varepsilon_{3}\varphi_{t}%
		-\varepsilon_{4}\psi_{t},\\
		c\theta_{t}=\kappa\theta_{xx}-\beta u_{tx}-\gamma_{1}\varphi_{t}-\gamma
		_{2}\psi_{t},
	\end{array}
	\right.  \label{B}%
\end{equation}
with the boundary conditions%
\[
u\left(  x,t\right)  =\varphi_{x}\left(  x,t\right)  =\psi_{x}\left(
x,t\right)  =\theta_{x}\left(  x,t\right)  =0,\;x=0,\;x=\pi,\;\;\forall t\geq
0.
\]
They proved that the solution decays exponentially when porous dissipation is
assumed for each porous equations. If the dissipation is considered only on
one porous structure, the solution cannot be asymptotically stable in
general. However, they give a sufficient conditions for which the solutions
decay exponentially. See also \cite{Bazarra1}.

Recently, Nemsi and Fareh \cite{Nemsi} proved that the solution of the system
\begin{equation}
	\left\{
	\begin{array}
		[c]{lc}%
		\rho u_{tt}=\mu u_{xx}+b\varphi_{x}+d\psi_{x}+\lambda u_{txx}, &
		\mbox{in}~~(0,L)\times(0,\infty
		),\\
		\kappa_{1}\varphi_{tt}=\alpha\varphi_{xx}+b_{1}\psi_{xx}-bu_{x}-\alpha
		_{1}\varphi-\alpha_{3}\psi-\tau_{1}\varphi_{t} & \mbox{in}~~(0,L)\times(0,\infty
		),\\
		\kappa_{2}\psi_{tt}=b_{1}\varphi_{xx}+\gamma\psi_{xx}-du_{x}-\alpha_{3}%
		\varphi-\alpha_{2}\psi-\tau_{2}\psi_{t} & \mbox{in}~~(0,L)\times(0,\infty
		),
	\end{array}
	\right.  \label{C}%
\end{equation}
with the boundary conditions%
\[%
\begin{array}
	[c]{ll}%
	u(t,0)=u(t,L)=\varphi_{x}(t,0)=\varphi_{x}(t,L)=\psi_{x}(t,0)=\psi
	_{x}(t,L)=0 & \mbox{in}~(0,\infty),
\end{array}
\]
decays exponentially without any assumption on the wave speeds.

In this paper we consider system (\ref{A}) subjected to the initial data%
\begin{align}
	u\left(  x,0\right)   &  =u_{0}\left(  x\right),~u_{t}\left(  x,0\right)
	=u_{1}\left(  x\right),\nonumber\label{A1}\\
	\varphi\left(  x,0\right)   &  =\varphi_{0}\left(  x\right),~\varphi
	_{t}\left(  x,0\right)  =\varphi_{1}\left(  x\right),\\
	\psi\left(  x,0\right)   &  =\psi_{0}\left(  x\right),~\psi_{t}\left(
	x,0\right)  =\psi_{1}\left(  x\right) \nonumber
\end{align}
for all $x\in\left(  0,\pi\right)  $ and the  boundary conditions%
\begin{equation}
	u_{x}\left(  0,t\right)  =u_{x}\left(  \pi,t\right)  =\varphi\left(
	0,t\right)  =\varphi\left(  \pi,t\right)  =\psi\left(  0,t\right)
	=\psi\left(  \pi,t\right)  =0,\;t\geq0, \label{A2}%
\end{equation}
or%
\begin{equation}
	u\left(  0,t\right)  =u\left(  \pi,t\right)  =\varphi_{x}\left(  0,t\right)
	=\varphi_{x}\left(  \pi,t\right)  =\psi_{x}\left(  0,t\right)  =\psi
	_{x}\left(  \pi,t\right)  =0,\;t\geq0. \label{A3}%
\end{equation}

Note that system (\ref{A}) coincides with  (\ref{B}) in the isothermal
case ($\beta=\gamma_{1}=\gamma_{2}=0$) and with  (\ref{C}) for
$\lambda=0.$ Therefore, system (\ref{A}) lacks thermal and
viscoelastic dissipations. Moreover, in somehow the two porous functions can be viewed as a multivalued function $(\varphi,\psi)$, consequently, system (\ref{A}) can be viewed as a porous elastic system with only one dissipation. Thus, our exponential stability extends the result of \cite{Apalara,Apalara1} and our stability number generalizes the wave speeds equality. 

We assume that the constitutive coefficients
$\mu,\alpha,\beta,\gamma,\alpha_{1}$ and $\alpha_{2}$ are positive and as
coupling is considered the coefficients $b,d$ must not be zero simultaneously.
Next, we define the energy associated with the solution $\left(
u,\varphi,\psi\right)  $ of system (\ref{A}) by%
\begin{align}
	E\left(  t\right)   &  :=\frac{1}{2}\int_{0}^{\pi}\left[  \rho u_{t}%
	^{2}+\kappa_{1}\varphi_{t}^{2}+\kappa_{2}\psi_{t}^{2}+\mu u_{x}^{2}%
	+\alpha\varphi_{x}^{2}+\gamma\psi_{x}^{2}+\alpha_{1}\varphi^{2}+\alpha_{2}%
	\psi^{2}\right. \label{E}\\
	&  \left.  +2\beta\varphi_{x}\psi_{x}+2bu_{x}\varphi+2du_{x}\psi+2\alpha
	_{3}\varphi\psi\right]  dx.\nonumber
\end{align}

\begin{remark}
	\label{Rk1}To guarantee that the energy $E\left(  t\right)  $ is a positive
	definite form, we assume that the matrix%
	\[
	A=\left(
	\begin{array}
		[c]{ccccc}%
		\mu & b & d & 0 & 0\\
		b & \alpha_{1} & \alpha_{3} & 0 & 0\\
		d & \alpha_{3} & \alpha_{2} & 0 & 0\\
		0 & 0 & 0 & \alpha & \beta\\
		0 & 0 & 0 & \beta & \gamma
	\end{array}
	\right)
	\]
	is positive definite.
	
	Indeed, since any principal submatrix of a positive definite matrix is also
	positive definite, then
	\begin{equation}
		\left(  \alpha_{1}-\frac{b^{2}}{\mu}\right)  \left(  \alpha_{2}-\frac{d^{2}%
		}{\mu}\right)  -\left(  \alpha_{3}-\frac{bd}{\mu}\right)  ^{2}>0, \label{C1}%
	\end{equation}%
	\begin{equation}
		\alpha_{1}\mu-b^{2}>0,\;\;\alpha_{2}\mu-d^{2}>0,\,\alpha_{1}\alpha_{2}%
		-\alpha_{3}^{2}>0\;\mbox{and}\;\alpha\gamma-\beta^{2}>0. \label{C2}%
	\end{equation}
	Therefore,%
	\[
	\alpha\varphi_{x}^{2}+\gamma\psi_{x}^{2}+2\beta\varphi_{x}\psi_{x}=\frac{1}%
	{2}\left(  \alpha-\frac{\beta^{2}}{\gamma}\right)  \varphi_{x}^{2}+\frac{1}%
	{2}\left(  \gamma-\frac{\beta^{2}}{\alpha}\right)  \psi_{x}^{2}%
	\]%
	\[
	+\frac{\alpha}{2}\left(  \varphi_{x}+\frac{\beta}{\alpha}\psi_{x}\right)
	^{2}+\frac{\gamma}{2}\left(  \psi_{x}+\frac{\beta}{\gamma}\varphi_{x}\right)
	^{2}\geq0.
	\]
	Moreover, there exists a $\varepsilon>0$ such that the matrix
	\[
	B=\left(
	\begin{array}
		[c]{ccc}%
		\mu-\varepsilon & b & d\\
		b & \alpha_{1}-\varepsilon & \alpha_{3}\\
		d & \alpha_{3} & \alpha_{2}-\varepsilon
	\end{array}
	\right)
	\]
	still positive definite. Thus,%
	\[
	E\left(  t\right)  =\frac{1}{2}\int_{0}^{\pi}\left(  \rho u_{t}^{2}+\kappa
	_{1}\varphi_{t}^{2}+\kappa_{2}\psi_{t}^{2}+\varepsilon u_{x}^{2}%
	+\varepsilon\varphi^{2}+\varepsilon\psi^{2}\right)  dx
	\]%
	\[
	+\frac{1}{2}\int_{0}^{\pi}\left[  \left(  \mu-\varepsilon\right)  u_{x}%
	^{2}+2bu_{x}\varphi+2du_{x}\psi+\left(  \alpha_{1}-\varepsilon\right)
	\varphi^{2}+2\alpha_{3}\varphi\psi+\left(  \alpha_{2}-\varepsilon\right)
	\psi^{2}\right]  dx
	\]%
	\[
	+\frac{1}{2}\int_{0}^{\pi}\left[  \alpha\left(  \varphi_{x}+\frac{\beta
	}{\alpha}\psi_{x}\right)  ^{2}+\gamma\left(  \psi_{x}+\frac{\beta}{\gamma
	}\varphi_{x}\right)  ^{2}\right]  dx
	\]%
	\[
	+\frac{1}{2}\left(  \alpha-\frac{\beta^{2}}{\gamma}\right)  \int_{0}^{\pi
	}\left\vert \varphi_{x}\right\vert ^{2}dx+\frac{1}{2}\left(  \gamma
	-\frac{\beta^{2}}{\alpha}\right)  \int_{0}^{\pi}\left\vert \psi_{x}\right\vert
	^{2}dx\geq0.
	\]
	
\end{remark}

The rest of the paper is organized as follows: in Section 2, we prove the well
posedness of the problem determined by (\ref{A}), (\ref{A1}) and (\ref{A3}).
In Section 3 we define two stability numbers $\chi_{0}$ and $\chi_{1}$ and
prove, by the use of the multiplier method, that the solution decays
exponentially, provided that $\chi_{0}=0$ and $\chi_{1}\neq0$. Section 4 is
devoted to the proof of the lack of the exponential decay when $\chi_{1}=0$ or
$\chi_{0}\neq0$.

\section{Existence and uniqueness}

In this section we prove the existence and the uniqueness of a solution to the
problem determined by system (\ref{A}) and conditions (\ref{A1}) and
(\ref{A3}), the case of boundary conditions (\ref{A2}) is similar.

As Neumann boundary conditions are considered for $\varphi$ and $\psi,$
Poincar\'{e}'s inequality cannot be applied. From the second and the third
equations of (\ref{A}) and boundary conditions (\ref{A3}), we have%
\begin{equation}%
	\begin{array}
		[c]{l}%
		\displaystyle\frac{d^{2}}{dt^{2}}\int_{0}^{\pi}\varphi dx=-\frac{\alpha_{1}%
		}{\kappa_{1}}\int_{0}^{\pi}\varphi dx-\frac{\alpha_{3}}{\kappa_{1}}\int
		_{0}^{\pi}\psi dx-\frac{\tau_{1}}{\kappa_{1}}\frac{d}{dt}\int_{0}^{\pi}\varphi
		dx-\frac{\tau_{2}}{\kappa_{1}}\frac{d}{dt}\int_{0}^{\pi}\psi dx,\\
		\\
		\displaystyle\frac{d^{2}}{dt^{2}}\int_{0}^{\pi}\psi dx=-\frac{\alpha_{3}%
		}{\kappa_{2}}\int_{0}^{\pi}\varphi dx-\frac{\alpha_{2}}{\kappa_{2}}\int
		_{0}^{\pi}\psi dx-\frac{\tau_{3}}{\kappa_{2}}\frac{d}{dt}\int_{0}^{\pi}%
		\varphi_{t}dx-\frac{\tau_{4}}{\kappa_{2}}\frac{d}{dt}\int_{0}^{\pi}\psi_{t}dx.
	\end{array}
	\label{X}%
\end{equation}
So if we set $X=\left(  \int_{0}^{\pi}\varphi dx,\int_{0}^{\pi}\varphi
_{t}dx,\int_{0}^{\pi}\psi dx,\int_{0}^{\pi}\psi_{t}dx\right)  ^{T}$ then
(\ref{X}) can be written%
\begin{equation}
	X_{t}\left(  t\right)  =MX\left(  t\right)  ,\ X\left(  0\right)  =X_{0},
	\label{X1}%
\end{equation}
where%
\[
M=\left(
\begin{array}
	[c]{cccc}%
	0 & 1 & 0 & 0\\
	-\frac{\alpha_{1}}{\kappa_{1}} & -\frac{\tau_{1}}{\kappa_{1}} & -\frac
	{\alpha_{3}}{\kappa_{1}} & -\frac{\tau_{2}}{\kappa_{1}}\\
	0 & 0 & 0 & 1\\
	-\frac{\alpha_{3}}{\kappa_{2}} & -\frac{\tau_{3}}{\kappa_{2}} & -\frac
	{\alpha_{2}}{\kappa_{2}} & -\frac{\tau_{4}}{\kappa_{2}}%
\end{array}
\right)
\]
and%
\[
X_{0}=\left(  \int_{0}^{\pi}\varphi_{0}dx,\int_{0}^{\pi}\varphi_{1}dx,\int
_{0}^{\pi}\psi_{0}dx,\int_{0}^{\pi}\psi_{1}dx\right)  ^{T}.
\]
Solving (\ref{X1}) we get%
\[
X\left(  t\right)  =\exp\left(  tM\right)  X_{0},
\]
in particular,%
\[%
\begin{array}
	[c]{cc}%
	\displaystyle\int_{0}^{\pi}\varphi dx=\sum_{k=1}^{4}\left(  \exp\left(
	tM\right)  \right)  _{1k}X_{0k}, & \displaystyle\int_{0}^{\pi}\psi
	dx=\sum_{j=1}^{4}\left(  \exp\left(  tM\right)  \right)  _{3k}X_{0k}.
\end{array}
\]
Therefore, if we set%
\[%
\begin{array}
	[c]{cc}%
	\displaystyle\bar{\varphi}=\varphi-\sum_{k=1}^{4}\left(  \exp\left(
	tM\right)  \right)  _{1k}X_{0k}, & \displaystyle\bar{\psi}=\psi-\sum_{j=1}%
	^{4}\left(  \exp\left(  tM\right)  \right)  _{3k}X_{0k},
\end{array}
\]
then $\left(  u,\overline{\varphi},\overline{\psi}\right)  $ solves (\ref{A})
with boundary conditions (\ref{A3}), and we have
\[
\int_{0}^{\pi}\overline{\varphi}dx=\int_{0}^{\pi}\overline{\psi}dx=0,
\]
which allows to apply Poincar\'{e}'s inequality. In the sequel we will work
with $\bar{\varphi}$ and $\bar{\psi}$ but for convenience, we write
$\varphi,\psi$ instead of $\bar{\varphi},\bar{\psi}$ respectively.

Furthermore, as porous dissipations are considered, the weights of porous
dampings $\tau_{1},\tau_{2},\tau_{3}$ and $\tau_{4}$ are assumed to satisfy%
\begin{equation}
	\tau_{1}>0,\;4\tau_{1}\tau_{4}>\left(  \tau_{2}+\tau_{3}\right)  ^{2}.
	\label{V}%
\end{equation}

\begin{lemma}
	The energy $E\left(  t\right)  $ satisfies along the solution $\left(
	u,\varphi,\psi\right)  $ of (\ref{A})-(\ref{A2}) the estimate%
	\begin{equation}
		E^{\prime}\left(  t\right)  =-\tau_{1}\int_{0}^{\pi}\varphi_{t}^{2}dx-\tau
		_{4}\int_{0}^{\pi}\psi_{t}^{2}dx-\left(  \tau_{2}+\tau_{3}\right)  \int
		_{0}^{\pi}\varphi_{t}\psi dx \label{B1}%
	\end{equation}
	and we have%
	\[
	E^{\prime}\left(  t\right)  =-\frac{1}{2}\left(  \tau_{1}-\frac{\left(
		\tau_{2}+\tau_{3}\right)  ^{2}}{4\tau_{2}}\right)  \int_{0}^{\pi}\varphi
	_{t}^{2}dx-\frac{1}{2}\left(  \tau_{2}-\frac{\left(  \tau_{2}+\tau_{3}\right)
		^{2}}{4\tau_{1}}\right)  \int_{0}^{\pi}\psi_{t}^{2}dx
	\]%
	\begin{equation}
		-\frac{\tau_{1}}{2}\int_{0}^{\pi}\left(  \varphi_{t}+\frac{\left(  \tau
			_{2}+\tau_{3}\right)  }{2\tau_{1}}\psi_{t}\right)  ^{2}dx-\frac{\tau_{2}}%
		{2}\int_{0}^{\pi}\left(  \psi_{t}+\frac{\left(  \tau_{2}+\tau_{3}\right)
		}{2\tau_{2}}\varphi_{t}\right)  ^{2}dx\leq0. \label{B2}%
	\end{equation}
	
\end{lemma}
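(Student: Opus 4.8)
The plan is to establish (B1) by differentiating the energy $E(t)$ defined in (E) under the integral sign (for sufficiently regular solutions, the general case following by density) and then substituting the three evolution equations of (A) for the acceleration terms. Differentiating directly produces, besides the velocity–acceleration products $\rho u_t u_{tt} + \kappa_1 \varphi_t \varphi_{tt} + \kappa_2 \psi_t \psi_{tt}$, the rate terms coming from the stiffness and coupling parts of the energy: the gradient terms $\mu u_x u_{tx} + \alpha \varphi_x \varphi_{tx} + \gamma \psi_x \psi_{tx}$, the mixed term $\beta(\varphi_{tx}\psi_x + \varphi_x \psi_{tx})$, the elastic–porous couplings $b(u_{tx}\varphi + u_x\varphi_t) + d(u_{tx}\psi + u_x\psi_t)$, and the zeroth-order couplings $\alpha_1 \varphi\varphi_t + \alpha_2 \psi\psi_t + \alpha_3(\varphi_t\psi + \varphi\psi_t)$. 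Replacing $\rho u_{tt}$, $\kappa_1\varphi_{tt}$, $\kappa_2\psi_{tt}$ by the right-hand sides of (A) injects the conservative spatial operators $\mu u_{xx}$, $\alpha\varphi_{xx}+\beta\psi_{xx}$, $\beta\varphi_{xx}+\gamma\psi_{xx}$ together with the coupling and damping terms.

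The main step is the integration by parts carried out on the four second-order space derivatives $u_{xx}$, $\varphi_{xx}$, $\psi_{xx}$ created by this substitution. Using the boundary conditions (A2), namely $u_x = 0$ and $\varphi = \psi = 0$ at $x=0,\pi$ (so that $\varphi_t = \psi_t = 0$ there as well), every boundary term vanishes, and each contribution of the form $\int_0^\pi u_t u_{xx}\,dx$ becomes $-\int_0^\pi u_{tx} u_x\,dx$, with the analogous reductions for the porous variables and for the cross terms carrying $\beta$, $b$, $d$. I then verify term by term that the $\mu$, $\alpha$, $\gamma$, $\beta$, $b$, $d$, $\alpha_1$, $\alpha_2$ and $\alpha_3$ contributions cancel in pairs; this bookkeeping is the bulk of the work, but it is entirely mechanical. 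What survives is exactly the dissipative part $-\tau_1\varphi_t^2 - \tau_4\psi_t^2 - (\tau_2+\tau_3)\varphi_t\psi_t$, integrated over $(0,\pi)$, which is (B1).

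To obtain (B2), I treat the remaining quadratic form in $(\varphi_t,\psi_t)$ pointwise and complete the square symmetrically. Setting $\sigma := \tau_2 + \tau_3$, completing the square once in $\varphi_t$ gives $\tau_1(\varphi_t + \frac{\sigma}{2\tau_1}\psi_t)^2 + (\tau_4 - \frac{\sigma^2}{4\tau_1})\psi_t^2$, and once in $\psi_t$ gives $\tau_4(\psi_t + \frac{\sigma}{2\tau_4}\varphi_t)^2 + (\tau_1 - \frac{\sigma^2}{4\tau_4})\varphi_t^2$; averaging the two representations yields the four-term decomposition displayed in (B2). The residual coefficients $\tau_1 - \frac{\sigma^2}{4\tau_4}$ and $\tau_4 - \frac{\sigma^2}{4\tau_1}$ are both strictly positive precisely because $4\tau_1\tau_4 > (\tau_2+\tau_3)^2$, while the square terms carry the positive weights $\tau_1 > 0$ and $\tau_4 > 0$ (positivity of $\tau_4$ being forced by (V)); hence $E'(t) \leq 0$. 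I do not anticipate a genuine obstacle: the only points demanding care are the sign and boundary-term bookkeeping in the integration by parts, and choosing the symmetric completion of the square so that assumption (V) is exactly what guarantees the nonnegativity of the residual coefficients.
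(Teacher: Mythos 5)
Your proposal is correct and follows essentially the same route as the paper: the paper multiplies the equations of \eqref{A} by $u_t,\varphi_t,\psi_t$, integrates over $(0,\pi)$, and integrates by parts so that the boundary terms vanish, which is exactly the computation you perform by differentiating $E(t)$ and substituting the equations, and your symmetric completion of the square supplies the passage to \eqref{B2} that the paper leaves unstated. Note only that your coefficients are the correct ones consistent with assumption \eqref{V} and with the later Lyapunov computation: the paper's displayed formulas contain typos (in \eqref{B1} the last factor should be $\psi_t$, not $\psi$, and in \eqref{B2} every $\tau_2$ appearing as a weight or denominator should be $\tau_4$), so your version is the one that should be regarded as the statement actually proved.
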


\begin{proof}
	Multiplying the equations of (\ref{A}) by $u_{t},\varphi_{t}$ and $\psi_{t}$
	respectively, then integrating with respect to $x$ over $\left(  0,\pi\right)
	$ and using integration by parts and boundary conditions (\ref{A3}), the
	estimate (\ref{B1}) follows immediately.
\end{proof}

To prove the well-posedness, we use a semigroup approach. First, we introduce
the energy space%
\[
\mathcal{H}=H_{0}^{1}\left(  0,\pi\right)  \times L^{2}\left(  0,\pi\right)
\times H_{\ast}^{1}\left(  0,\pi\right)  \times L_{\ast}^{2}\left(
0,\pi\right)  \times H_{\ast}^{1}\left(  0,\pi\right)  \times L_{\ast}%
^{2}\left(  0,\pi\right)  ,
\]
where,%
\begin{align*}
	H_{\ast}^{1}\left(  0,\pi\right)   &  =\left\{  \phi\in H^{1}\left(
	0,\pi\right)  :\int_{0}^{\pi}\phi\left(  x\right)  dx=0\right\}  ,\,\\
	L_{\ast}^{2}\left(  0,\pi\right)   &  =\left\{  \phi\in L^{2}\left(
	0,\pi\right)  :\int_{0}^{\pi}\phi\left(  x\right)  dx=0\right\}  .
\end{align*}
We note that $L_{\ast}^{2}\left(  0,\pi\right)  $ and $H_{\ast}^{1}\left(
0,\pi\right)  $ are closed subspaces of $L^{2}\left(  0,\pi\right)  $ and
$H^{1}\left(  0,\pi\right)  $ respectively. Thus, they are Hilbert spaces and
so $\mathcal{H}$ is.

Next, we rewrite the system (\ref{A}) in the setting of Lumer-Phillips
theorem, to do so we introduce the new variables $v=u_{t},\phi=\varphi_{t}$
and $w=\psi_{t}$ the system (\ref{A}) becomes
\[
\left\{
\begin{array}
	[c]{l}%
	u_{t}=v\\
	v_{t}=\dfrac{1}{\rho}\left(  \mu u_{xx}+b\varphi_{x}+d\psi_{x}\right)  ,\\
	\varphi_{t}=\phi\\
	\phi_{t}=\dfrac{1}{\kappa_{1}}\left(  \alpha\varphi_{xx}+\beta\psi_{xx}%
	-bu_{x}-\alpha_{1}\varphi-\alpha_{3}\psi-\tau_{1}\phi-\tau_{2}w\right)  ,\\
	\psi_{t}=w\\
	w_{t}=\dfrac{1}{\kappa_{2}}\left(  \beta\varphi_{xx}+\gamma\psi_{xx}%
	-du_{x}-\alpha_{3}\varphi-\alpha_{2}\psi-\tau_{3}\phi-\tau_{4}w\right)  ,
\end{array}
\right.
\]
which can be written%
\begin{equation}
	\left\{
	\begin{array}
		[c]{l}%
		U_{t}=\mathcal{A}U,\\
		U\left(  0\right)  =U_{0},
	\end{array}
	\right.  \label{9}%
\end{equation}
where $\mathcal{A}:D\left(  \mathcal{A}\right)  \subset\mathcal{H}%
\longrightarrow\mathcal{H}$ is the operator defined by
\[
\mathcal{A=}\left(
\begin{array}
	[c]{cccccc}%
	0 & I & 0 & 0 & 0 & 0\\
	\frac{\mu}{\rho}\partial_{xx} & 0 & \frac{b}{\rho}\partial_{x} & 0 & \frac
	{d}{\rho}\partial_{x} & 0\\
	0 & 0 & 0 & I & 0 & 0\\
	-\frac{b}{\kappa_{1}}\partial_{x} & 0 & \frac{\alpha}{\kappa_{1}}\partial
	_{xx}-\frac{\alpha}{\kappa_{1}} & -\frac{\tau_{1}}{\kappa_{1}} & \frac{\beta
	}{\kappa_{1}}\partial_{xx}-\frac{\alpha_{3}}{\kappa_{1}} & -\frac{\tau_{2}%
	}{\kappa_{1}}\\
	0 & 0 & 0 & 0 & 0 & I\\
	-\frac{d}{\kappa_{2}}\partial_{x} & 0 & \frac{\beta}{\kappa_{2}}\partial
	_{xx}-\alpha_{3} & -\frac{\tau_{3}}{\kappa_{2}} & \frac{\gamma}{\kappa_{2}%
	}\partial_{xx}-\frac{\alpha_{2}}{\kappa_{2}} & -\frac{\tau_{4}}{\kappa_{2}}%
\end{array}
\right)
\]
with domain%
\[
D\left(  \mathcal{A}\right)  =\left(  H^{2}\left(  0,\pi\right)  \cap
H_{0}^{1}\left(  0,\pi\right)  \right)  \times H_{0}^{1}\left(  0,\pi\right)
\times H_{\ast}^{2}\left(  0,\pi\right)  \times H_{\ast}^{1}\left(
0,\pi\right)  \times H_{\ast}^{2}\left(  0,\pi\right)  \times H_{\ast}%
^{1}\left(  0,\pi\right)  .
\]
Here $I$ is the identity operator, $\partial$ denotes the derivative with
respect to $x$ and
\[
H_{\ast}^{2}\left(  0,\pi\right)  =\left\{  \phi\in H^{2}\left(  0,\pi\right)
:\phi\left(  0\right)  =\phi\left(  \pi\right)  =0\right\}  .
\]
The following two theorems are useful to proof our well posedness result.

\begin{theorem}
	{\normalfont(Lumer-Phillips)} \cite{Pazy,Vrabie} Let $\mathcal{A}:
	D(\mathcal{A})\subset H\longrightarrow H$ be a densely defined operator. Then
	$\mathcal{A}$ generates a C$_{0}$-semigroup of contractions on $H$ if and only if
	
	\begin{itemize}
		\item[\textit{(i)}] $\mathcal{A}$ is dissipative;
		
		\item[\textit{(ii)}] there exists $\lambda>0$ such that $\lambda
		I-\mathcal{A}$ is surjective.
	\end{itemize}
\end{theorem}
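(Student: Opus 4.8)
The plan is to prove both implications of the equivalence. Throughout I write $\langle\cdot,\cdot\rangle$ for the inner product of $H$, and recall that in a Hilbert space dissipativity of $\mathcal{A}$ means $\mathrm{Re}\,\langle\mathcal{A}x,x\rangle\le0$ for every $x\in D(\mathcal{A})$. The necessity is essentially formal, while the sufficiency carries the real weight, since from an abstract resolvent condition one must manufacture an actual semigroup.

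For the necessity, assume $\mathcal{A}$ generates a $C_{0}$-semigroup of contractions $\{T(t)\}_{t\ge0}$. Dissipativity follows by differentiating the contraction inequality at $t=0$: for $x\in D(\mathcal{A})$ one has $\mathrm{Re}\,\langle\mathcal{A}x,x\rangle=\lim_{t\to0^{+}}t^{-1}\bigl(\mathrm{Re}\,\langle T(t)x,x\rangle-\|x\|^{2}\bigr)$, and since $\mathrm{Re}\,\langle T(t)x,x\rangle\le\|T(t)x\|\,\|x\|\le\|x\|^{2}$ the limit is $\le0$. For the range condition, fix any $\lambda>0$ and use the Laplace representation $(\lambda I-\mathcal{A})^{-1}=\int_{0}^{\infty}e^{-\lambda t}T(t)\,dt$, which converges in operator norm because $\|T(t)\|\le1$; integration by parts against the orbit $T(t)x$ shows this bounded operator is a two-sided inverse of $\lambda I-\mathcal{A}$, so $\lambda I-\mathcal{A}$ is in particular surjective.

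For the sufficiency I would first turn dissipativity into a resolvent estimate. For $x\in D(\mathcal{A})$ and $\lambda>0$, Cauchy--Schwarz and dissipativity give $\|(\lambda I-\mathcal{A})x\|\,\|x\|\ge\mathrm{Re}\,\langle(\lambda I-\mathcal{A})x,x\rangle=\lambda\|x\|^{2}-\mathrm{Re}\,\langle\mathcal{A}x,x\rangle\ge\lambda\|x\|^{2}$, hence $\|(\lambda I-\mathcal{A})x\|\ge\lambda\|x\|$. This makes $\lambda I-\mathcal{A}$ injective with inverse of norm $\le1/\lambda$ on its range. Using the single $\lambda>0$ from hypothesis \textit{(ii)} for which $\lambda I-\mathcal{A}$ is onto, the inverse $(\lambda I-\mathcal{A})^{-1}$ is then bounded and defined on all of $H$, and its boundedness forces $\mathcal{A}$ to be closed.

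The crux, which I expect to be the main obstacle, is to upgrade surjectivity at one point to surjectivity on all of $(0,\infty)$ and then to build the semigroup. I would set $\Lambda=\{\lambda>0:\lambda I-\mathcal{A}\text{ is bijective}\}$ and show it is nonempty, open, and closed in $(0,\infty)$, so that $\Lambda=(0,\infty)$ by connectedness: openness is a Neumann-series perturbation argument exploiting the uniform bound $\|(\lambda I-\mathcal{A})^{-1}\|\le1/\lambda$ (the series converges for $|\mu-\lambda|<\lambda$), while closedness follows because for $\lambda_{n}\to\lambda$ in $\Lambda$ the vectors $x_{n}=(\lambda_{n}I-\mathcal{A})^{-1}y$ are Cauchy, the estimate together with the closedness of $\mathcal{A}$ identifying the limit. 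Once every $\lambda>0$ lies in the resolvent set with $\|(\lambda I-\mathcal{A})^{-1}\|\le1/\lambda$, the Hille--Yosida theorem for contraction semigroups applies; alternatively one constructs the semigroup directly from the Yosida approximations $\mathcal{A}_{\lambda}=\lambda\mathcal{A}(\lambda I-\mathcal{A})^{-1}$, verifying that the $e^{t\mathcal{A}_{\lambda}}$ are contractions and converge strongly as $\lambda\to\infty$ to the sought semigroup.
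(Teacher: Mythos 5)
The paper does not prove this statement at all: it is quoted as a classical theorem with a citation to \cite{Pazy,Vrabie}, and the paper merely \emph{applies} it (to the operator $\mathcal{A}$ of system (\ref{A})) in the proof of Theorem \ref{TH1}. So there is no proof in the paper to compare yours against; what you have written is, in effect, the textbook proof that those references contain, and it is essentially correct. Your necessity direction (differentiating the contraction inequality at $t=0$, and inverting $\lambda I-\mathcal{A}$ by the Laplace transform of the orbit) is the standard argument, and your sufficiency direction has the right architecture: the dissipativity estimate $\|(\lambda I-\mathcal{A})x\|\geq\lambda\|x\|$, closedness of $\mathcal{A}$ from the bounded everywhere-defined inverse at the single $\lambda$ of hypothesis \textit{(ii)}, the Neumann-series/connectedness argument promoting surjectivity at one point to all of $(0,\infty)$, and then Hille--Yosida. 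Note that in the Hilbert-space setting, with density of $D(\mathcal{A})$ assumed as in the statement here, nothing more is needed; in the general Banach-space version one must also handle duality sets and (in some formulations) deduce density, which you correctly avoid.

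Two small caveats. First, the integral $\int_{0}^{\infty}e^{-\lambda t}T(t)\,dt$ should be read strongly, i.e.\ as $x\mapsto\int_{0}^{\infty}e^{-\lambda t}T(t)x\,dt$: a $C_{0}$-semigroup is in general only strongly continuous, so the operator-norm Riemann/Bochner integral you invoke need not be defined, even though the tail estimate you cite is fine once the integral is interpreted strongly. Second, your proof is a reduction to Hille--Yosida rather than a self-contained argument; that is the standard route (and the one taken in \cite{Pazy}), but if self-containedness is wanted you would have to carry out the Yosida-approximation construction $\mathcal{A}_{\lambda}=\lambda\mathcal{A}(\lambda I-\mathcal{A})^{-1}$ you only sketch: prove $\mathcal{A}_{\lambda}x\to\mathcal{A}x$ on $D(\mathcal{A})$, that each $e^{t\mathcal{A}_{\lambda}}$ is a contraction (here dissipativity enters again, via $\|\lambda(\lambda I-\mathcal{A})^{-1}\|\leq1$), and strong convergence of $e^{t\mathcal{A}_{\lambda}}$ uniformly on compact time intervals. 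Neither point is a gap in the logic, only in the level of detail.
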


\begin{theorem}
	\cite{Vrabie} Let $\mathcal{A}: D(\mathcal{A})\subset H\longrightarrow H$ be
	the infinitesimal generator of a C$_{0}$-semigroup $\{S(t);t\geq0\}$. Then, for
	each $\xi\in D(\mathcal{A})$ and each $t\geq0$, we have $S(t)\xi\in
	D(\mathcal{A})$, and the mapping
	\begin{align*}
		&  t\longrightarrow S(t)\xi
	\end{align*}
	is of class $C^{1}$ on $\left[  0,+\infty\right)  $ and satisfies
	\[
	\frac{d}{dt}(S(t)\xi)=\mathcal{A}S(t)\xi=S(t)\mathcal{A}\xi.
	\]
	
\end{theorem}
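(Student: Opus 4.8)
The plan is to derive everything from two ingredients: the semigroup law $S(t+h)=S(t)S(h)=S(h)S(t)$ and the definition of the generator, namely that for $\xi\in D(\mathcal{A})$ the strong limit $\mathcal{A}\xi=\lim_{h\to0^{+}}h^{-1}\left(S(h)\xi-\xi\right)$ exists. I would also invoke the standard a priori bound $\left\Vert S(t)\right\Vert \le Me^{\omega t}$ valid for any $C_{0}$-semigroup, whose only role here is to guarantee that $\left\Vert S(s)\right\Vert$ stays bounded on each compact time interval.

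First I would establish the right-hand derivative together with the invariance of the domain in one stroke. Fix $\xi\in D(\mathcal{A})$ and $t\ge0$, and for $h>0$ write the forward difference quotient, using $S(t+h)=S(t)S(h)$, as
\[
\frac{S(t+h)\xi-S(t)\xi}{h}=S(t)\,\frac{S(h)\xi-\xi}{h}.
\]
Since $S(t)$ is a bounded linear operator and the quotient on the right converges to $\mathcal{A}\xi$, letting $h\to0^{+}$ produces $S(t)\mathcal{A}\xi$. Reading the very same quotient instead through $S(t+h)=S(h)S(t)$, i.e. as $h^{-1}\left(S(h)(S(t)\xi)-S(t)\xi\right)$, the existence of this limit is by definition the assertion that $S(t)\xi\in D(\mathcal{A})$ with $\mathcal{A}S(t)\xi=S(t)\mathcal{A}\xi$. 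Thus the domain invariance, the commutation relation, and the value of the right derivative all fall out simultaneously.

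Next I would treat the left derivative for $t>0$. For small $h>0$, using $S(t)=S(t-h)S(h)$, I would split
\[
\frac{S(t)\xi-S(t-h)\xi}{h}-S(t)\mathcal{A}\xi=S(t-h)\Big(\frac{S(h)\xi-\xi}{h}-\mathcal{A}\xi\Big)+\big(S(t-h)-S(t)\big)\mathcal{A}\xi.
\]
The first term tends to $0$ because $\left\Vert S(t-h)\right\Vert$ stays bounded on $[0,t]$ while the difference quotient converges to $\mathcal{A}\xi$; the second term tends to $0$ by strong continuity of the semigroup applied to the \emph{fixed} vector $\mathcal{A}\xi\in H$. Hence the left derivative also equals $S(t)\mathcal{A}\xi$, so $t\mapsto S(t)\xi$ is differentiable on $[0,+\infty)$ with $\frac{d}{dt}(S(t)\xi)=S(t)\mathcal{A}\xi=\mathcal{A}S(t)\xi$. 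Finally, because $\mathcal{A}\xi$ is a fixed element of $H$, the map $t\mapsto S(t)\mathcal{A}\xi$ is continuous by strong continuity, so the derivative is continuous and the trajectory is of class $C^{1}$.

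The only genuinely delicate point is the left derivative, where $S(t-h)$ is itself moving as $h\to0^{+}$ while being applied to a vector that is merely converging rather than fixed. The splitting above is precisely what resolves this: it confines the moving-operator effect to a term controlled by $\sup_{0\le s\le t}\left\Vert S(s)\right\Vert<\infty$ and transfers the remaining $h$-dependence onto a fixed vector, where strong continuity alone suffices. Everything else is an immediate consequence of the semigroup law and the definition of $\mathcal{A}$.
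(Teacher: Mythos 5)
Your proof is correct and complete: the forward-difference identity read two ways gives domain invariance, the commutation $\mathcal{A}S(t)\xi=S(t)\mathcal{A}\xi$, and the right derivative at once, and your splitting for the left difference quotient correctly isolates the moving operator $S(t-h)$ against a vanishing term and puts the residual $h$-dependence on the fixed vector $\mathcal{A}\xi$. Note that the paper itself gives no proof of this statement — it is quoted verbatim from the cited reference (Vrabie) as a known tool — and your argument is exactly the standard textbook proof found there, so there is nothing to reconcile.
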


Now, we state and prove the well-posedness theorem of the problem (\ref{A}),
(\ref{A1}) and (\ref{A3}).

\begin{theorem}
	\label{TH1}For any $U_{0}=\left(  u_{0},u_{1},\varphi_{0},\varphi_{1},\psi
	_{0},\psi_{1}\right)  \in\mathcal{H}$, the problem (\ref{A}),(\ref{A1}) and
	(\ref{A3}) has a unique weak solution $\left(  u,\varphi,\psi\right)  $
	satisfies the property:%
	\begin{align*}
		u &  \in C\left(  \left[  0,+\infty\right[  ;H_{0}^{1}\left(  0,\pi\right)
		\right)  \cap C^{1}\left(  \left[  0,+\infty\right[  ;L^{2}\left(
		0,\pi\right)  \right)  ,\\
		\varphi,\psi &  \in C\left(  \left[  0,+\infty\right[  ;H_{\ast}^{1}\left(
		0,\pi\right)  \right)  \cap C^{1}\left(  \left[  0,+\infty\right[  ;L_{\ast
		}^{2}\left(  0,\pi\right)  \right)  .
	\end{align*}
	Moreover, if $U_{0}\in D\left(  \mathcal{A}\right)  ,$ the solution $\left(
	u,\varphi,\psi\right)  $ satisfies%
	\begin{align*}
		u &  \in C\left(  \left[  0,+\infty\right[  ;H^{2}\cap H_{0}^{1}\left(
		0,\pi\right)  \right)  \cap C^{1}\left(  \left[  0,+\infty\right[  ;H_{0}%
		^{1}\left(  0,\pi\right)  \right)  \cap C^{2}\left(  \left[  0,+\infty\right[
		;L^{2}\left(  0,\pi\right)  \right)  ,\\
		\varphi,\psi &  \in C\left(  \left[  0,+\infty\right[  ;H_{\ast}^{2}\left(
		0,\pi\right)  \right)  \cap C^{1}\left(  \left[  0,+\infty\right[  ;H_{\ast
		}^{1}\left(  0,\pi\right)  \right)  \cap C^{2}\left(  \left[  0,+\infty
		\right[  ;L_{\ast}^{2}\left(  0,\pi\right)  \right)  .
	\end{align*}
\end{theorem}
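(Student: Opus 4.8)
The plan is to cast (\ref{9}) as an abstract Cauchy problem and invoke the Lumer--Phillips theorem for the operator $\mathcal{A}$, after equipping $\mathcal{H}$ with the inner product induced by the energy form (\ref{E}). For $U=(u,v,\varphi,\phi,\psi,w)$ and $\tilde U=(\tilde u,\tilde v,\tilde\varphi,\tilde\phi,\tilde\psi,\tilde w)$ I would set
\begin{align*}
\langle U,\tilde U\rangle_{\mathcal{H}}
&=\int_0^\pi\Big(\rho v\tilde v+\kappa_1\phi\tilde\phi+\kappa_2 w\tilde w
+\mu u_x\tilde u_x+\alpha\varphi_x\tilde\varphi_x+\gamma\psi_x\tilde\psi_x
+\alpha_1\varphi\tilde\varphi+\alpha_2\psi\tilde\psi\\
&\quad+\beta(\varphi_x\tilde\psi_x+\psi_x\tilde\varphi_x)
+b(u_x\tilde\varphi+\varphi\tilde u_x)
+d(u_x\tilde\psi+\psi\tilde u_x)
+\alpha_3(\varphi\tilde\psi+\psi\tilde\varphi)\Big)dx.
\end{align*}
By Remark \ref{Rk1} the associated norm is equivalent to the canonical product norm on $\mathcal{H}$, so $(\mathcal{H},\langle\cdot,\cdot\rangle_{\mathcal{H}})$ is a Hilbert space, and it then suffices to verify the two hypotheses of Lumer--Phillips on the densely defined $\mathcal{A}$: dissipativity, and surjectivity of $I-\mathcal{A}$.

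Density of $D(\mathcal{A})$ in $\mathcal{H}$ is routine, since $D(\mathcal{A})$ contains the product of the spaces of smooth functions meeting the boundary and mean-zero constraints, which is dense. For dissipativity I would compute $\langle\mathcal{A}U,U\rangle_{\mathcal{H}}$ for $U\in D(\mathcal{A})$: after integration by parts, with all boundary terms vanishing on account of (\ref{A3}), the elliptic and coupling contributions cancel against the corresponding terms of the inner product exactly as in the proof of the Lemma, leaving only the porous damping. Thus $\operatorname{Re}\langle\mathcal{A}U,U\rangle_{\mathcal{H}}$ equals the right-hand side of (\ref{B1}), which by the decomposition (\ref{B2}) together with assumption (\ref{V}) is $\le 0$; hence $\mathcal{A}$ is dissipative.

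For maximality I would show that $I-\mathcal{A}$ is onto. Given $F=(f_1,\dots,f_6)\in\mathcal{H}$, solving $(I-\mathcal{A})U=F$ and eliminating $v=u-f_1$, $\phi=\varphi-f_3$, $w=\psi-f_5$ reduces the problem to a coupled second-order elliptic system for $(u,\varphi,\psi)$ in $V:=H_0^1(0,\pi)\times H_{\ast}^{1}(0,\pi)\times H_{\ast}^{1}(0,\pi)$. I would recast this as a weak formulation $a\big((u,\varphi,\psi),(\tilde u,\tilde\varphi,\tilde\psi)\big)=L(\tilde u,\tilde\varphi,\tilde\psi)$ on $V$ and apply the Lax--Milgram theorem. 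Continuity of $a$ and $L$ is immediate from Cauchy--Schwarz; the crux is coercivity. Here the first-order coupling terms integrate by parts into the symmetric pairings $2b\int_0^\pi u_x\varphi\,dx$ and $2d\int_0^\pi u_x\psi\,dx$ of (\ref{E}), the gradient block is controlled through $\alpha\gamma-\beta^2>0$, and the zeroth-order block is governed by the positive-definiteness of the matrix $A$ from Remark \ref{Rk1}; combined with the additional diagonal $+\kappa_i$ and $+\tau_i$ contributions produced by the resolvent shift and with Poincar\'e's inequality on $H_{\ast}^{1}$ (available precisely because we passed to mean-zero $\varphi,\psi$), these yield a uniform bound $a(\cdot,\cdot)\ge c\|\cdot\|_V^2$. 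Lax--Milgram then gives a unique weak solution; elliptic regularity upgrades it to $u\in H^2\cap H_0^1$ and $\varphi,\psi\in H_{\ast}^{2}$, and back-substitution recovers $v,\phi,w$ in the required spaces, so that $U\in D(\mathcal{A})$ and $(I-\mathcal{A})U=F$.

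With both hypotheses verified, Lumer--Phillips ensures that $\mathcal{A}$ generates a $C_0$-semigroup of contractions $\{S(t)\}_{t\ge0}$ on $\mathcal{H}$. For $U_0\in\mathcal{H}$ the mild solution $U(t)=S(t)U_0$ provides the $C\big([0,\infty);\cdot\big)\cap C^1\big([0,\infty);\cdot\big)$ regularity of $(u,\varphi,\psi)$ stated for the weak case, while for $U_0\in D(\mathcal{A})$ the second quoted theorem promotes $U(t)$ to a classical solution with the higher regularity claimed. I expect the coercivity estimate in the surjectivity step to be the main obstacle, since it is where the structural hypotheses collected in Remark \ref{Rk1} and the mean-zero reduction must be combined; the density and dissipativity conditions are essentially bookkeeping once the energy inner product is fixed.
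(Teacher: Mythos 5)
Your proposal is correct and follows the same overall strategy as the paper: Lumer--Phillips with the energy inner product, the dissipativity computation reproducing the right-hand side of (\ref{B1}), maximality via a weak formulation solved by Lax--Milgram, and elliptic regularity to land in $D(\mathcal{A})$. The one genuine structural difference is the maximality step: you solve $(I-\mathcal{A})U=F$ directly, which is exactly the hypothesis of Lumer--Phillips as stated, whereas the paper solves $\mathcal{A}U=F$ (i.e.\ proves $0\in\rho(\mathcal{A})$) and then needs an extra Neumann/geometric-series argument to conclude that $\lambda I-\mathcal{A}$ is invertible for small $|\lambda|>0$. Your choice buys a cleaner logical closure (no perturbation step), at the price of a bulkier bilinear form: eliminating $v=u-f_1$, $\phi=\varphi-f_3$, $w=\psi-f_5$ leaves the zeroth-order terms $\rho\int u u^{\ast}$, $\kappa_1\int\varphi\varphi^{\ast}$, $\kappa_2\int\psi\psi^{\ast}$ and the nonsymmetric damping block with weights $\tau_1,\tau_2,\tau_3,\tau_4$; these are harmless for coercivity since the quadratic form $\tau_1\varphi^2+(\tau_2+\tau_3)\varphi\psi+\tau_4\psi^2$ is nonnegative by (\ref{V}), and Lax--Milgram does not require symmetry. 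The paper's form at $\lambda=0$ is exactly the static part of the energy, so its coercivity follows from Remark \ref{Rk1} alone, which is tidier but then forces the extra resolvent-perturbation step. One caution: the phrase ``elliptic regularity upgrades it'' hides the only delicate part of either argument, namely that the $\varphi$ and $\psi$ equations couple $\varphi_{xx}$ and $\psi_{xx}$, so one first gets $\alpha\varphi+\beta\psi\in H^2$ and $\beta\varphi+\gamma\psi\in H^2$ and then uses $\alpha\gamma-\beta^2>0$ to separate $\varphi,\psi\in H^2$; likewise the Neumann conditions $\varphi_x=\psi_x=0$ at $x=0,\pi$ are natural conditions that must be recovered from the weak formulation by testing with functions not vanishing at the boundary, as the paper does. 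Spelling this out would complete your plan; as written it is a gap of detail, not of substance.
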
	
	
	\begin{proof}
		According to the Lumer-Phillips theorem, it suffices to prove that the
		operator $\mathcal{A}$ is dissipative and maximal.
		
		First, we have for any $U\in D\left(  \mathcal{A}\right)  $,%
		\[
		\operatorname{Re}\left\langle \mathcal{A}U,U\right\rangle _{\mathcal{H}}%
		=-\tau_{1}\int_{0}^{\pi}\varphi_{t}^{2}dx-\left(  \tau_{2}+\tau_{3}\right)
		\int_{0}^{\pi}\varphi_{t}\psi_{t}dx-\tau_{4}\int_{0}^{\pi}\psi_{t}^{2}%
		dx\leq0.
		\]
		Therefore, $\mathcal{A}$ is dissipative.
		
		Secondly, \bigskip let $F=\left(  f_{1},f_{2},f_{3},f_{4},f_{5},f_{6}\right)
		\in\mathcal{H},$ and find $U\in D\left(  \mathcal{A}\right)  $ such that
		$\mathcal{A}U=\mathcal{F}$, that is,%
		\[
		\left\{
		\begin{array}
			[c]{l}%
			v=f_{1}\in H_{0}^{1}\\
			\mu u_{xx}+b\varphi_{x}+d\psi_{x}=\rho f_{2}\in L^{2},\\
			\phi=f_{3}\in H_{\ast}^{1}\\
			\alpha\varphi_{xx}+\beta\psi_{xx}-bu_{x}-\alpha_{1}\varphi-\alpha_{3}\psi
			-\tau_{1}\phi-\tau_{2}w=\kappa_{1}f_{4}\in L_{\ast}^{2},\\
			w=f_{5}\in H_{\ast}^{1}\\
			\beta\varphi_{xx}+\gamma\psi_{xx}-du_{x}-\alpha_{3}\varphi-\alpha_{2}\psi
			-\tau_{3}\phi-\tau_{4}w=\kappa_{2}f_{6}\in L_{\ast}^{2}.
		\end{array}
		\right.
		\]
		From the first, the third and the fifth equations we have $v\in H_{0}%
		^{1}\left(  0,\pi\right)  $ and $\phi,w\in H_{\ast}^{1}\left(  0,\pi\right)
		.$
	Substituting $v,\phi$ and $w$ by $f_{1},f_{3}$ and $f_{5}$ respectively we
	obtain
		\begin{equation}
			\left\{
			\begin{array}
				[c]{l}%
				\displaystyle\mu u_{xx}+b\varphi_{x}+d\psi_{x}=\rho f_{2}\in L^{2},\\
				\displaystyle\alpha\varphi_{xx}+\beta\psi_{xx}-bu_{x}-\alpha_{1}\varphi
				-\alpha_{3}\psi=\kappa_{1}f_{4}+\tau_{1}f_{3}+\tau_{2}f_{5}=g_{1}\in L_{\ast
				}^{2},\\
				\displaystyle\beta\varphi_{xx}+\gamma\psi_{xx}-du_{x}-\alpha_{3}\varphi
				-\alpha_{2}\psi=\kappa_{2}f_{6}+\tau_{3}f_{3}+\tau_{4}f_{5}=g_{2}\in L_{\ast
				}^{2},
			\end{array}
			\right.  \label{b}%
		\end{equation}
		Taking the $L^{2}$-product of (\ref{b})$_{1}$ ,(\ref{b})$_{2}$ and
		(\ref{b})$_{3}$ by $u^{\ast},\varphi^{\ast}$ and $\psi^{\ast}$ respectively,
		using integration by parts and adding the obtained equations, we arrive at%
	
	\begin{equation}
		a\left(  V,V^{\ast}\right)  =L\left(  V^{\ast}\right)  ,\label{d}%
	\end{equation}
	where, $a$ is the bilinear form defined on $\mathcal{W}=\left(  H_{0}%
	^{1}\left(  0,\pi\right)  \times H_{\ast}^{1}\left(  0,\pi\right)  \times
	H_{\ast}^{1}\left(  0,\pi\right)  \right)  $ and for $V=\left(u,\varphi,\psi\right),~\left(u^{\ast},\varphi^{\ast},\psi^{\ast}\right)\in\mathcal{W}$, by%
	\begin{align*}
	a\left(  V,V^{\ast}\right)  =&\mu\int_{0}^{\pi}u_{x}u_{x}^{\ast}dx+b\int
	_{0}^{\pi}\varphi u_{x}^{\ast}dx+d\int_{0}^{\pi}\psi u_{x}^{\ast}dx+\alpha
	\int_{0}^{\pi}\varphi_{x}\varphi_{x}^{\ast}dx
	\\
	&+\beta\int_{0}^{\pi}\psi_{x}\varphi_{x}^{\ast}dx+b\int_{0}^{\pi}u_{x}%
	\varphi^{\ast}dx+\alpha_{1}\int_{0}^{\pi}\varphi\varphi^{\ast}dx+\alpha
	_{3}\int_{0}^{\pi}\psi\varphi^{\ast}dx
	\\
	&+\beta\int_{0}^{\pi}\varphi_{x}\psi_{x}^{\ast}dx+\gamma\int_{0}^{\pi}\psi
	_{x}\psi_{x}^{\ast}dx+d\int_{0}^{\pi}u_{x}\psi^{\ast}dx
	\\
	&+\alpha_{3}\int_{0}^{\pi}\varphi\psi^{\ast}dx+\alpha_{2}\int_{0}^{\pi}\psi
	\psi^{\ast}dx
	\end{align*}
	and $L$ is the linear form defined by
	\[
	L\left(  V^{\ast}\right)  =-\rho\int_{0}^{\pi}f_{2}u^{\ast}dx-\int_{0}^{\pi
	}g_{1}\varphi^{\ast}dx-\int_{0}^{\pi}g_{2}\psi^{\ast}dx.
	\]
	Clearly, $a$ and $L$ are continuous. Furthermore, from Remark \ref{Rk1}, there
	exists $\varepsilon>0,$ such that%
	\begin{align*}
	a\left(  V,V\right)  =&\mu\int_{0}^{\pi}u_{x}^{2}dx+\alpha\int_{0}^{\pi}%
	\varphi_{x}^{2}dx+\gamma\int_{0}^{\pi}\psi_{x}^{2}dx+\alpha_{1}\int_{0}^{\pi
	}\varphi^{2}dx+\alpha_{2}\int_{0}^{\pi}\psi^{2}dx
	\\
	&+2b\int_{0}^{\pi}u_{x}\varphi dx+2d\int_{0}^{\pi}\psi u_{x}dx+2\beta\int
	_{0}^{\pi}\varphi_{x}\psi_{x}dx+2\alpha_{3}\int_{0}^{\pi}\psi\varphi dx,
	\\
	\geq&\frac{1}{2}\left(  \alpha-\frac{\beta^{2}}{\gamma}\right)  \int_{0}^{\pi}\varphi_{x}%
	^{2}dx+\frac{1}{2}\left(  \gamma-\frac{\beta^{2}}{\alpha}\right)  \int_{0}^{\pi}\psi_{x}
	^{2}dx+\varepsilon\int_{0}^{\pi}\left(  u_{x}^{2}+\varphi^{2}+\psi^{2}\right)dx.
	\end{align*}
	Thus,
	\[
	a\left(  V,V\right)  \geq c\left\Vert V\right\Vert _{\mathcal{W}}^{2},%
	\]
	for $c=\dfrac{1}{2}\min\{\displaystyle\alpha-\frac{\beta^{2}}{\gamma
	},\displaystyle\gamma-\frac{\beta^{2}}{\alpha},2\varepsilon\},$ which shows
	that $a$ is coercive. Therefore, Lax-Milgram theorem ensures the existence of
	a unique $V=\left(  u,\varphi,\psi\right)  $\bigskip$\in\mathcal{W}$ satisfying%
	\[%
	\begin{array}
		[c]{cc}%
		a\left(  V,V^{\ast}\right)  =L\left(  V^{\ast}\right)  , & \forall V^{\ast}%
		\in\mathcal{W}.
	\end{array}
	\]
	Now, taking $V^{\ast}=(u^{\ast},0,0)$ in (\ref{d}) we get%
	\begin{equation}
		\mu\int_{0}^{\pi}u_{x}u_{x}^{\ast}=-\int_{0}^{\pi}\left(  \rho f_{2}%
		-b\varphi_{x}-d\psi_{x}\right)  u^{\ast}dx,\,\forall u^{\ast}\in H_{0}^{1}.
	\end{equation}
	The elliptic regularity theory shows that%
	\[
	u\in H^{2}\left(  0,\pi\right),
	\]
	with
	\[u_{xx}=\frac{1}{\mu}\left(  \rho f_{2}%
	-b\varphi_{x}-d\psi_{x}\right),\]
	which solves the first equation of (\ref{b}).\\ 
	Next, let $\varphi^{\ast}\in H_{0}^{1}\left(  0,\pi\right)  $ and define
	\[
	\widetilde{\varphi}\left(  x\right)  =\varphi^{\ast}\left(  x\right)
	-\int_{0}^{\pi}\varphi^{\ast}\left(  x\right)  dx,
	\]
	clearly, $\widetilde{\varphi}\in H_{\ast}^{1}\left(  0,\pi\right)  .$ Taking
	$V^{\ast}=\left(  0,\widetilde
	{\varphi},0\right)  $ in (\ref{d}) we get%
	\begin{equation}
		\int_{0}^{\pi}\left(  \alpha\varphi_{x}+\beta\psi_{x}\right)  \widetilde
		{\varphi}_{x}dx=-\int_{0}^{\pi}\left(  g_{1}+bu_{x}+\alpha_{1}\varphi
		+\alpha_{3}\psi\right)  \widetilde{\varphi}dx,\,\forall\widetilde{\varphi}\in
		H_{\ast}^{1},\label{D}%
	\end{equation}
	which means that%
	\begin{equation}
		\alpha\varphi+\beta\psi\in H^{2}\left(  0,\pi\right),\label{e}%
	\end{equation}
with
\[\alpha\varphi_{xx}+\beta\psi_{xx}=  g_{1}+bu_{x}+\alpha_{1}\varphi
+\alpha_{3}\psi.\]
	Similarly, we obtain%
	\begin{equation}
		\beta\varphi+\gamma\psi\in H^{2}\left(  0,\pi\right) \label{f}%
	\end{equation}
\[\beta\varphi_{xx}+\gamma\psi_{xx}=g_2+du_x+\alpha_{3}\varphi+\alpha_{2}\psi.\]
	From (\ref{e}) and (\ref{f}) we get%
	\[
	\varphi,\psi\in H^{2}\left(  0,\pi\right)  .
	\]
	To show that $\varphi$ belongs to $H_{\ast}^{2}\left(  0,\pi\right)  $ we take
	$\varphi^{\ast}\in C^{1}\left(  0,\pi\right)  $ in (\ref{D}) and define
	$\widetilde{\varphi}$ as above, then using integration by parts, we obtain,%
	\begin{equation}
		\left[  \left(  \alpha\varphi_{x}+\beta\psi_{x}\right)  \widetilde{\varphi
		}\right]  _{0}^{\pi}-\int_{0}^{\pi}\left(  \alpha\varphi_{xx}+\beta\psi
		_{xx}-g_{1}-bu_{x}-\alpha_{1}\varphi-\alpha_{3}\psi\right)  \widetilde
		{\varphi}dx=0,\,\forall\widetilde{\varphi}\in H_{\ast}^{1}.\label{F}%
	\end{equation}
	First, we take $\widetilde{\varphi}\in C_{0}^{1}\left(  0,\pi\right)  ,$ we
	get%
	\[
	\alpha\varphi_{xx}+\beta\psi_{xx}=g_{1}+bu_{x}+\alpha_{1}\varphi+\alpha
	_{3}\psi,\quad a.e.\;\mbox{in}\;\left(  0,\pi\right)  .
	\]
	Back to (\ref{F}), we get%
	\[
	\left(  \alpha\varphi_{x}\left(  \pi\right)  +\beta\psi_{x}\left(  \pi\right)
	\right)  \widetilde{\varphi}\left(  \pi\right)  -\left(  \alpha\varphi
	_{x}\left(  0\right)  +\beta\psi_{x}\left(  0\right)  \right)  \widetilde
	{\varphi}\left(  0\right)  =0,\,\forall\widetilde{\varphi}\in H_{\ast}^{1}.
	\]
	As $\widetilde{\varphi}$ is arbitrary in $H_{\ast}^{1}\left(  0,\pi\right)  $,
	we obtain%
	\[
	\alpha\varphi_{x}\left(  \pi\right)  +\beta\psi_{x}\left(  \pi\right)
	=0\;\mbox{and}\;\alpha\varphi_{x}\left(  0\right)  +\beta\psi_{x}\left(
	0\right)  =0.
	\]
	Similarly, we obtain%
	\[
	\beta\varphi_{x}\left(  \pi\right)  +\gamma\psi_{x}\left(  \pi\right)
	=0\;\mbox{and}\;\beta\varphi_{x}\left(  0\right)  +\gamma\psi_{x}\left(
	0\right)  =0.
	\]
	Therefore, $\varphi,\psi\in H_{\ast}^{2}\left(  0,\pi\right)  ,$ consequently
	$U\in D\left(  \mathcal{A}\right)  ,$ and $0\in\rho\left(  \mathcal{A}\right)
	.$ Moreover, using a geometric series argument we prove that $\lambda
	I-\mathcal{A}=\mathcal{A}(\lambda\mathcal{A}^{-1}-I)$ is invertible for
	$|\lambda|<\Vert\mathcal{A}^{-1}\Vert$, then $\lambda\in\rho(\mathcal{A})$,
	which completes the proof that $\mathcal{A}$ is the infinitesimal generator of
	a C$_{0}-$semigroup, then the Lumer-Phillips theorem ensures the existence of
	unique solution to the problem (\ref{A}),(\ref{A1}) and (\ref{A3}) satisfying
	the statements of Theorem \ref{TH1}.
\end{proof}

\begin{remark}
	We note that if $U_{0}\in D(\mathcal{A})$ then the solution
	$U(t)=e^{t\mathcal{A}}U_{0}\in C((0,\infty);D(\mathcal{A}))\cap C^{1}%
	((0,\infty);\mathcal{H})$ and (\ref{9}) is satisfied in $\mathcal{H}$ for
	every $t>0$. It turns out that $u,\varphi,\psi$ satisfy (\ref{A}) in the
	strong sense.
\end{remark}

If $U_{0}\in\mathcal{H}$ there exists a sequence $U_{0n}\in D(\mathcal{A})$
converging to $U_{0}$ in $\mathcal{H}$. Accordingly, there exists a sequence
of solutions $U_{n}(t)=e^{t\mathcal{A}}U_{0n}$ such that $u_{n},\varphi
_{n},\psi_{n}$ satisfy (\ref{A}) in $L^{2}$ for every $t>0$, and for any
$T>0$, $u_{n}\rightarrow u$ in $C((0,T),H_{0}^{1})\cap C^{1}((0,T);L^{2})$,
$\varphi_{n}\rightarrow\varphi$ and $\psi_{n}\rightarrow\psi$ in
$C((0,T),H_{\ast}^{1})\cap C^{1}((0,T);L^{2})$. Therefore, if we multiply the
equations of (\ref{A}) for $u_{n},\varphi_{n},\psi_{n}$ by $u^{\ast}\in
H_{0}^{1}$ and $\varphi^{\ast},\psi^{\ast}\in H_{\ast}^{1}$, respectively,
then integrate by parts with respect to $x$ and integrate with respect to $t$,
finally passing to the limit, we find that $u,\varphi$ and $\psi$ are weak
solutions to the variational form of system (\ref{A}).

\subsection{Exponential stability}

In the present section we tackle the main objective of this paper, that is the
prove of the exponential decay of the solution of (\ref{A}). First we
introduce the two following constants%
\[
\chi_{0}=\left(  \frac{\mu\kappa_{1}}{\rho}-\alpha\right)  \left(  \frac
{\mu\kappa_{2}}{\rho}-\gamma\right)  -\beta^{2},
\]
and%
\[
\chi_{1}=d^{2}\left(  \frac{\mu\kappa_{1}}{\rho}-\alpha\right)  +b^{2}\left(
\frac{\mu\kappa_{2}}{\rho}-\gamma\right)  +2bd\beta.
\]
Our stability result reads as follow:

\begin{theorem}
	\label{TH2}Let\emph{ }$\left(  u,\varphi,\psi\right)  $ be a solution of
	problem \emph{(}\ref{A}) with boundary conditions (\ref{A3}). Assuming that%
	\begin{equation}
		\chi_{0}=0\hspace{0.05in}\mbox{and }\hspace{0.05in}\chi_{1}\neq0. \label{H}%
	\end{equation}
	Then the energy functional\emph{ }$E\left(  t\right)  $ defined by
	(\ref{E})\ satisfies%
	\begin{equation}
		E\left(  t\right)  \leq\lambda e^{-\xi t},\mbox{ }\forall t\geq0, \label{02}%
	\end{equation}
	where $\lambda$\ and $\xi$\ are two positive constants.
\end{theorem}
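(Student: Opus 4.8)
The plan is to apply the multiplier method: I will build a Lyapunov functional $\mathcal{L}$ that is equivalent to the energy $E$ and satisfies $\mathcal{L}'(t)\le -c\,E(t)$ for some $c>0$; estimate (\ref{02}) then follows from Gronwall's inequality. By (\ref{B1})--(\ref{B2}) together with (\ref{V}), the dissipation already furnishes
\[
E'(t)\le -c_0\int_0^\pi(\varphi_t^2+\psi_t^2)\,dx,\qquad c_0>0,
\]
so the two porous velocities are controlled for free. The whole task is therefore to reconstruct, modulo terms absorbable by this dissipation, the missing pieces of $E$: the elastic terms $\int_0^\pi u_t^2\,dx$ and $\int_0^\pi u_x^2\,dx$, and the porous terms $\int_0^\pi(\varphi_x^2+\psi_x^2+\varphi^2+\psi^2)\,dx$. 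Since $u$ satisfies Dirichlet conditions and $\varphi,\psi$ have been normalized to zero mean, Poincaré's inequality applies to all three fields, and the coercivity of the form in Remark \ref{Rk1} lets me absorb every cross term once the corresponding diagonal terms are dominated.

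The porous potential and volume energies are recovered by $F_1:=\kappa_1\int_0^\pi\varphi\varphi_t\,dx+\kappa_2\int_0^\pi\psi\psi_t\,dx$: differentiating and inserting the two porous equations produces the negative of the quadratic form $\alpha\varphi_x^2+2\beta\varphi_x\psi_x+\gamma\psi_x^2+\alpha_1\varphi^2+2\alpha_3\varphi\psi+\alpha_2\psi^2$, which is positive definite by (\ref{C2}), together with $+\kappa_1\int\varphi_t^2+\kappa_2\int\psi_t^2$ (absorbed by the dissipation) and coupling terms in $u_x$. The elastic kinetic energy is recovered by $F_2:=-\rho\int_0^\pi u u_t\,dx$, whose derivative is $-\rho\int u_t^2+\mu\int u_x^2$ plus coupling terms; this controls $\int u_t^2$ but leaves the wrong-sign term $+\mu\int u_x^2$ to be dominated.

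The crux, and the step I expect to be the main obstacle, is a coupling functional that produces a strictly negative multiple of $\int_0^\pi u_x^2\,dx$ while exploiting (\ref{H}). Writing $P:=\frac{\mu\kappa_1}{\rho}-\alpha$ and $Q:=\frac{\mu\kappa_2}{\rho}-\gamma$, so that $\chi_0=PQ-\beta^2$ and $\chi_1=d^2P+b^2Q+2bd\beta$, I take
\[
F_3:=\rho\int_0^\pi u_t\,(a_1\varphi_x+a_2\psi_x)\,dx+\rho\int_0^\pi u_x\,(a_1\varphi_t+a_2\psi_t)\,dx,
\]
with constants $a_1,a_2$ to be fixed. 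Differentiating, substituting $\rho u_{tt}$, $\kappa_1\varphi_{tt}$, $\kappa_2\psi_{tt}$ from (\ref{A}) and integrating by parts, the dangerous kinetic cross term $\int_0^\pi u_{xt}(a_1\varphi_t+a_2\psi_t)\,dx$ cancels, because the two pieces of $F_3$ carry the same weight $\rho$ and contribute it with opposite signs after one integration by parts in $x$. What then obstructs the estimate are the two second-order coupling terms $\int_0^\pi u_x\varphi_{xx}\,dx$ and $\int_0^\pi u_x\psi_{xx}\,dx$; their coefficients form the linear system
\[
\begin{pmatrix}-P/\kappa_1 & \beta/\kappa_2\\ \beta/\kappa_1 & -Q/\kappa_2\end{pmatrix}\begin{pmatrix}a_1\\ a_2\end{pmatrix}=\begin{pmatrix}0\\ 0\end{pmatrix},
\]
whose determinant equals $\chi_0/(\kappa_1\kappa_2)$. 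Hence $\chi_0=0$ is exactly the condition under which a nontrivial $(a_1,a_2)$ annihilating both obstructive terms exists; fixing such a kernel vector, the leading surviving production of $F_3$ is $-\rho\big(\tfrac{b a_1}{\kappa_1}+\tfrac{d a_2}{\kappa_2}\big)\int_0^\pi u_x^2\,dx$, and a short computation shows that this coefficient vanishes precisely when $\chi_1=0$, so under (\ref{H}) it is nonzero. Choosing the sign of the kernel vector makes this production negative. Verifying these two algebraic facts -- that the obstruction determinant is $\chi_0/(\kappa_1\kappa_2)$ and that the residual $u_x^2$-coefficient is governed by $\chi_1$ -- is the heart of the argument; the remaining terms generated by $F_3$ are of lower order and absorbable by Young's and Poincaré's inequalities.

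Finally I set $\mathcal{L}:=N E+N_1F_1+N_2F_2+N_3F_3$ and fix the constants in the usual hierarchy: choose the kernel vector, then $N_2,N_3$ so that the combined coefficient of $\int u_x^2$ from $N_2F_2$ and $N_3F_3$ is negative while $N_2F_2$ still delivers $-\rho N_2\int u_t^2$; pick $N_1$ large enough to dominate the cross terms generated by $F_3$ into the porous energy; and finally enlarge $N$ so that the dissipation $-c_0\int(\varphi_t^2+\psi_t^2)$ in $E'$ absorbs all remaining porous-velocity contributions via Young's inequality. This yields $\mathcal{L}'(t)\le -c\,E(t)$, while $|F_1|+|F_2|+|F_3|\le C\,E$ gives $c_1E\le\mathcal{L}\le c_2E$ for $N$ large. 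Combining, $\mathcal{L}'(t)\le -(c/c_2)\mathcal{L}(t)$, so Gronwall's inequality gives $\mathcal{L}(t)\le\mathcal{L}(0)e^{-\xi t}$, and the equivalence $\mathcal{L}\sim E$ yields (\ref{02}) with a suitable $\lambda>0$ and $\xi>0$.
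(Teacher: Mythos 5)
Your overall architecture is the paper's own: your $F_2=-\rho\int_0^\pi uu_t\,dx$ is the paper's third functional, and your coupling functional $F_3$ is, up to the identification $a_1=\sigma\kappa_1$, $a_2=\omega\kappa_2$ and an overall factor $\mu/\rho$, exactly the paper's second functional; your kernel-vector derivation is correct and is in fact a cleaner way to see why hypothesis (\ref{H}) is what makes the method work (the obstruction determinant is $\chi_0/(\kappa_1\kappa_2)$, and with the kernel choice $(a_1,a_2)=(\kappa_1\beta,\kappa_2 P)$ one has $\chi_1=(b\beta+dP)^2/P$ when $P\neq0$, so the residual $u_x^2$-coefficient $\tfrac{ba_1}{\kappa_1}+\tfrac{da_2}{\kappa_2}$ is nonzero precisely when $\chi_1\neq0$; the degenerate cases $P=0$ or $\beta=0$ check out as well).

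The genuine gap is in your $F_1$ and the ensuing hierarchy of constants. Taking $F_1=\kappa_1\int_0^\pi\varphi\varphi_t\,dx+\kappa_2\int_0^\pi\psi\psi_t\,dx$ leaves in $F_1'$ the couplings $-b\int_0^\pi u_x\varphi\,dx-d\int_0^\pi u_x\psi\,dx$, and your plan to absorb them ``by coercivity once the diagonal terms are dominated'' is circular. Any Young splitting that keeps the $(\varphi,\psi)$-cost below the fixed production $-c_pN_1\int_0^\pi(\varphi^2+\psi^2)dx$ of $F_1$ itself forces the $u_x^2$-weight to be bounded below, $\epsilon\geq\epsilon_0>0$ with $\epsilon_0$ determined by $b,d,c_p$; hence $N_1F_1$ leaks $\epsilon_0N_1\int_0^\pi u_x^2dx$, which must be beaten by the coupling functional, requiring $N_3\geq C_1N_1$. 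But $N_3F_3$ itself leaks $KN_3\int_0^\pi(\varphi^2+\psi^2+\varphi_x^2+\psi_x^2+\cdots)dx$ (from the lower-order terms $u_x\cdot(\varphi,\psi,\varphi_t,\psi_t)$ and the $\varphi_x^2,\psi_x^2$ productions you call absorbable), and these can only be dominated by $N_1F_1$, requiring $N_1\geq C_2N_3$. The two constraints are compatible only if $C_1C_2\leq1$, a restriction on the constitutive coefficients that is not implied by (\ref{H}), (\ref{V}), or the positive definiteness of $A$; the term $NE$ is of no help since (\ref{B2}) only controls $\varphi_t,\psi_t$. Your declared order of choices ($N_2,N_3$ first, then $N_1$ large) makes this concrete: enlarging $N_1$ after $N_3$ is frozen inflates the $u_x^2$ leak past the available negative production.

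The paper closes exactly this hole by correcting $F_1$ with the term $-\frac{\rho}{\mu}\int_0^\pi u_t\left(\int_0^x(b\varphi+d\psi)(y)\,dy\right)dx$ (plus $\frac{\tau_1}{2}\int\varphi^2+\frac{\tau_4}{2}\int\psi^2$ to cancel the damping crosses). Differentiating and substituting $\rho u_{tt}=\mu u_{xx}+b\varphi_x+d\psi_x$, the $u_{xx}$ part integrates by parts to $+b\int_0^\pi u_x\varphi\,dx+d\int_0^\pi u_x\psi\,dx$ --- the boundary term vanishes because $\int_0^\pi(b\varphi+d\psi)dx=0$, which is where the zero-mean normalization of $\varphi,\psi$ is genuinely needed --- cancelling the bad coupling identically; the $b\varphi_x+d\psi_x$ part produces $\frac{1}{\mu}\int_0^\pi(b\varphi+d\psi)^2dx$, shifting the coefficients to $\alpha_1-b^2/\mu$ etc., still positive by (\ref{C1}); and the remaining piece $-\frac{\rho}{\mu}\int_0^\pi u_t\int_0^x(b\varphi_t+d\psi_t)dy\,dx$ is bounded by $\delta\int u_t^2+m_\delta\int(\varphi_t^2+\psi_t^2)$ with $\delta>0$ \emph{free}. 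The leak out of $F_1$ is thus tunable (the paper takes $\delta=\rho/(2N_1)$) and is absorbed by the fixed-strength $-\rho\int u_t^2$ of $-\rho\int uu_t$, so the hierarchy becomes acyclic: the coefficient-one functional leaks $2\mu\int u_x^2$, killed by $N_2$; $N_2$ leaks porous terms, killed by $N_1$; $N_1$ leaks only velocities, killed by $N$. To repair your proof, adopt this corrected $F_1$ (or an equivalent device cancelling the $u_x$-couplings); as written, the absorption step fails for general admissible coefficients.
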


\begin{remark}
	The hypothesis (\ref{H}) is equivalent to the following:
	
	There exist two constants $\sigma,\omega\in%
	\mathbb{R}
	^{\ast},$ such that%
	\begin{equation}%
		\begin{array}
			[c]{cc}%
			\dfrac{\mu}{\rho}=\dfrac{\sigma\alpha+\omega\beta}{\sigma\kappa_{1}}%
			=\dfrac{\sigma\beta+\omega\gamma}{\omega\kappa_{2}}, & \hspace{0.04in}%
			\mbox{if }\hspace{0.04in}\beta\neq0,\\
			\left(  \dfrac{\mu}{\rho}=\dfrac{\alpha}{\kappa_{1}}\hspace{0.04in}%
			\mbox{and }\hspace{0.04in}b\neq0\right)  \hspace{0.04in}\mbox{or }\hspace
			{0.04in}\left(  \dfrac{\mu}{\rho}=\dfrac{\gamma}{\kappa_{2}}\hspace
			{0.04in}\mbox{and }\hspace{0.04in}d\neq0\right)  , & \hspace{0.04in}%
			\mbox{if }\hspace{0.04in}\beta=0.
		\end{array}
		\label{H'}%
	\end{equation}
	It is clear that in the case where $\beta\neq0$\ and $\sigma=b,\omega=d$ solve
	(\ref{H'}), then $\alpha,\beta,\gamma,\mu,\rho,\kappa_{1}$ and $\kappa_{2}$
	solve (\ref{H}).
\end{remark}

The proof of Theorem \ref{TH2}, will be established through several lemmas.

\begin{lemma}
	For $\left(  u,\varphi,\psi\right)  $ solution of (\ref{A}), there exist
	positive constants $\hat{\alpha},\hat{\gamma},\hat{\alpha}_{1}$and
	$\hat{\alpha}_{2},$ such that the functional%
	\begin{align*}
		F_{1}\left(  t\right)   &  =\kappa_{1}\int_{0}^{1}\varphi_{t}\varphi
		dx+\kappa_{2}\int_{0}^{1}\psi_{t}\psi dx+\frac{\tau_{1}}{2}\int_{0}^{1}%
		\varphi^{2}dx+\frac{\tau_{4}}{2}\int_{0}^{1}\psi^{2}dx\\
		&  -\frac{\rho}{\mu}\int_{0}^{1}u_{t}\left(  \int_{0}^{x}\left(
		b\varphi+d\psi\right)  \left(  y\right)  dy\right)  dx
	\end{align*}
	satisfies for any $\delta>0$, the estimate%
	\begin{align}
		F_{1}^{\prime}\left(  t\right)   &  \leq-\hat{\alpha}\int_{0}^{1}\varphi
		_{x}^{2}dx-\hat{\gamma}\int_{0}^{1}\psi_{x}^{2}dx-\frac{\hat{\alpha}_{1}}%
		{2}\int_{0}^{1}\varphi^{2}dx-\frac{\hat{\alpha}_{2}}{2}\int_{0}^{1}\psi
		^{2}dx\nonumber\\
		&  +\delta\int_{0}^{1}u_{t}^{2}dx+m_{\delta}\int_{0}^{1}\varphi_{t}%
		^{2}dx+m_{\delta}\int_{0}^{1}\psi_{t}^{2}dx. \label{4}%
	\end{align}
	
\end{lemma}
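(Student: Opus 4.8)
The plan is to differentiate $F_1$ along the solution, substitute the three field equations of \eqref{A} to eliminate the second-order time derivatives $u_{tt},\varphi_{tt},\psi_{tt}$, and integrate by parts in $x$ using the boundary conditions \eqref{A3}. Writing $P(x,t)=\int_0^x\left(b\varphi+d\psi\right)(y,t)\,dy$ for the primitive appearing in the last summand of $F_1$, the derivative $F_1'$ splits into one contribution per summand. The first two summands $\kappa_1\int\varphi_t\varphi+\kappa_2\int\psi_t\psi$ produce $\kappa_1\int\varphi_t^2+\kappa_2\int\psi_t^2$ together with $\kappa_1\int\varphi_{tt}\varphi+\kappa_2\int\psi_{tt}\psi$; replacing $\varphi_{tt},\psi_{tt}$ by the right-hand sides of the porous equations and integrating by parts (the Neumann conditions $\varphi_x=\psi_x=0$ at $x=0,\pi$ kill all boundary terms) yields the gradient form $-\alpha\int\varphi_x^2-2\beta\int\varphi_x\psi_x-\gamma\int\psi_x^2$, the potential form $-\alpha_1\int\varphi^2-2\alpha_3\int\varphi\psi-\alpha_2\int\psi^2$, the coupling terms $-b\int u_x\varphi-d\int u_x\psi$, and the damping terms $-\tau_1\int\varphi_t\varphi-\tau_4\int\psi_t\psi-\tau_2\int\psi_t\varphi-\tau_3\int\varphi_t\psi$.

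The remaining summands exist precisely to cancel the indefinite terms. Differentiating $\tfrac{\tau_1}{2}\int\varphi^2$ and $\tfrac{\tau_4}{2}\int\psi^2$ gives $\tau_1\int\varphi_t\varphi$ and $\tau_4\int\psi_t\psi$, removing the like damping terms. For the last summand I use $\rho u_{tt}=\mu u_{xx}+b\varphi_x+d\psi_x$; the decisive observation is that since $\varphi,\psi\in H_*^1$ have zero average, $P(0)=P(\pi)=0$, so every boundary term vanishes and one obtains $-\tfrac{\rho}{\mu}\int u_{tt}P=b\int u_x\varphi+d\int u_x\psi+\tfrac{1}{\mu}\int(b\varphi+d\psi)^2$. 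The first two terms exactly cancel the coupling terms $-b\int u_x\varphi-d\int u_x\psi$, which is the reason the multiplier $-\tfrac{\rho}{\mu}\int u_tP$ was chosen.

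Collecting what survives leaves $\kappa_1\int\varphi_t^2+\kappa_2\int\psi_t^2$, the gradient form, the \emph{modified} potential form $-\int\bigl[(\alpha_1-\tfrac{b^2}{\mu})\varphi^2+2(\alpha_3-\tfrac{bd}{\mu})\varphi\psi+(\alpha_2-\tfrac{d^2}{\mu})\psi^2\bigr]$, and the leftover terms $-\tau_2\int\psi_t\varphi-\tau_3\int\varphi_t\psi-\tfrac{\rho}{\mu}\int u_tP_t$. Here the two structural facts of Remark \ref{Rk1} enter: $\alpha\gamma-\beta^2>0$ makes the gradient form positive definite, giving the bound $-\hat\alpha\int\varphi_x^2-\hat\gamma\int\psi_x^2$, while \eqref{C1}--\eqref{C2} make the matrix with entries $\alpha_1-\tfrac{b^2}{\mu},\ \alpha_3-\tfrac{bd}{\mu},\ \alpha_2-\tfrac{d^2}{\mu}$ positive definite, giving $-\hat\alpha_1\int\varphi^2-\hat\alpha_2\int\psi^2$.

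Finally the three leftover terms are absorbed by Young's inequality. For $\tfrac{\rho}{\mu}\int u_tP_t$ I bound $P_t$ by Cauchy--Schwarz, $|P_t(x)|\le\sqrt{\pi}\,\|b\varphi_t+d\psi_t\|_{L^2}$, so that $\tfrac{\rho}{\mu}\int u_tP_t\le\delta\int u_t^2+C_\delta\int(\varphi_t^2+\psi_t^2)$; the cross terms $\tau_2\int\psi_t\varphi$ and $\tau_3\int\varphi_t\psi$ are split so as to consume half of $\hat\alpha_1\int\varphi^2$ and $\hat\alpha_2\int\psi^2$, at the price of further $\int\varphi_t^2,\int\psi_t^2$ contributions. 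Merging all the (non-$u_t$) kinetic terms into $m_\delta\int\varphi_t^2+m_\delta\int\psi_t^2$ produces exactly \eqref{4}. I expect the only delicate point to be the positive-definiteness bookkeeping that converts the \emph{positive} contribution $\tfrac{1}{\mu}\int(b\varphi+d\psi)^2$ into a net negative potential form, as this is where hypotheses \eqref{C1}--\eqref{C2} are indispensable.
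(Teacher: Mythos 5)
Your proposal is correct and follows essentially the same route as the paper: differentiate $F_1$, substitute the equations of (\ref{A}), integrate by parts (with the zero-mean property of $\varphi,\psi$ killing the boundary terms of the primitive $P$), so that the coupling terms cancel, the potential form acquires the $-b^{2}/\mu,\,-bd/\mu,\,-d^{2}/\mu$ corrections, and the leftover cross terms are absorbed by Young's and Cauchy--Schwarz inequalities. The only cosmetic difference is that you bound the gradient and potential quadratic forms directly by positive definiteness of the corresponding $2\times2$ matrices, whereas the paper introduces explicit Young parameters $\varepsilon,\eta,\epsilon$ chosen via (\ref{C1})--(\ref{C2}); the underlying facts are identical.
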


\begin{proof}
	The differentiation of $F_{1}\left(  t\right)  $ gives%
	\begin{align*}
		F_{1}^{\prime}\left(  t\right)   &  =\kappa_{1}\int_{0}^{1}\varphi_{tt}\varphi
		dx+\kappa_{1}\int_{0}^{1}\varphi_{t}^{2}dx+\kappa_{2}\int_{0}^{1}\psi_{tt}\psi
		dx+\kappa_{2}\int_{0}^{1}\psi_{t}^{2}dx\\
		&  +\tau_{1}\int_{0}^{1}\varphi\varphi_{t}dx+\tau_{4}\int_{0}^{1}\psi_{t}\psi
		dx-\frac{\rho}{\mu}\int_{0}^{1}u_{tt}\left(  \int_{0}^{x}\left(
		b\varphi+d\psi\right)  \left(  y\right)  dy\right)  dx\\
		&  -\frac{\rho}{\mu}\int_{0}^{1}u_{t}\left(  \int_{0}^{x}\left(
		b\varphi+d\psi\right)  _{t}\left(  y\right)  dy\right)  dx.
	\end{align*}
	By exploiting the equations of (\ref{A}) and using integration by parts, we
	get
	\begin{align*}
		F_{1}^{\prime}\left(  t\right)   &  =-\alpha\int_{0}^{1}\varphi_{x}%
		^{2}dx-2\beta\int_{0}^{1}\psi_{x}\varphi_{x}dx-\gamma\int_{0}^{1}\psi_{x}%
		^{2}dx\\
		&  -\left(  \alpha_{1}-\frac{b^{2}}{\mu}\right)  \int_{0}^{1}\varphi
		^{2}dx-2\left(  \alpha_{3}-\frac{bd}{\mu}\right)  \int_{0}^{1}\psi\varphi
		dx-\left(  \alpha_{2}-\frac{d^{2}}{\mu}\right)  \int_{0}^{1}\psi^{2}dx\\
		&  +\kappa_{1}\int_{0}^{1}\varphi_{t}^{2}dx+\kappa_{2}\int_{0}^{1}\psi_{t}%
		^{2}dx-\tau_{2}\int_{0}^{1}\psi_{t}\varphi dx-\tau_{3}\int_{0}^{1}\varphi
		_{t}\psi dx\\
		&  -\frac{\rho}{\mu}\int_{0}^{1}u_{t}\left(  \int_{0}^{x}\left(  b\varphi
		_{t}+d\psi_{t}\right)  \left(  y\right)  dy\right)  dx.
	\end{align*}
	Then, using Young's and Cauchy Schwarz inequalities, we obtain
	\begin{align*}
		F_{1}^{\prime}\left(  t\right)   &  \leq-\left(  \alpha-\beta\varepsilon
		\right)  \int_{0}^{1}\varphi_{x}^{2}dx-\left(  \gamma-\frac{b}{\varepsilon
		}\right)  \int_{0}^{1}\psi_{x}^{2}dx\\
		&  -\left[  \left(  \alpha_{1}-\frac{b^{2}}{\mu}\right)  -\left(  \alpha
		_{3}-\frac{bd}{\mu}\right)  \eta-\epsilon\right]  \int_{0}^{1}\varphi^{2}dx\\
		&  -\left[  \left(  \alpha_{2}-\frac{d^{2}}{\mu}\right)  -\frac{1}{\eta
		}\left(  \alpha_{3}-\frac{bd}{\mu}\right)  -\epsilon\right]  \int_{0}^{1}%
		\psi^{2}dx\\
		&  +m\left(  1+\frac{1}{\epsilon}+\frac{1}{\delta}\right)  \int_{0}^{1}%
		\varphi_{t}^{2}dx+m\left(  1+\frac{1}{\epsilon}+\frac{1}{\delta}\right)
		\int_{0}^{1}\psi_{t}^{2}dx+\delta\int_{0}^{1}u_{t}^{2}dx,
	\end{align*}
	for any $\varepsilon,\eta,\epsilon,\delta>0.$
	
	First, by virtue of (\ref{C2}), we can choose $\varepsilon>0$ such that%
	\[
	\hat{\alpha}=\alpha-\beta\varepsilon>0,\mbox{and }\,\,\,\hat{\gamma}%
	=\gamma-\dfrac{b}{\varepsilon}>0.
	\]

	Similarly, (\ref{C1}) allows us to choose $\eta>0$ such that%
	\[
	\hat{\alpha}_{1}=\left(  \alpha_{1}-\frac{b^{2}}{\mu}\right)  -\left(
	\alpha_{3}-\frac{bd}{\mu}\right)  \eta>0,
	\]
	and%
	\[
	\hat{\alpha}_{2}=\left(  \alpha_{2}-\frac{d^{2}}{\mu}\right)  -\frac{1}{\eta
	}\left(  \alpha_{3}-\frac{bd}{\mu}\right)  >0.
	\]
	Finally, we choose $\epsilon>0$ so that%
	\[
	\hat{\alpha}_{1}-\epsilon\geq\frac{\hat{\alpha}_{1}}{2},\,\mbox{and }\,\,\hat
	{\alpha}_{2}-\epsilon\geq\frac{\hat{\alpha}_{2}}{2}.
	\]

	Consequently, the estimate (\ref{4}) follows.
\end{proof}

\begin{lemma}
	Let $\sigma$ and $\omega$ be two constants that satisfy (\ref{H'}), then the
	functional%
	\[
	F_{2}\left(  t\right)  :=\rho\int_{0}^{\pi}\left(  \left(  \sigma\alpha
	+\omega\beta\right)  \varphi_{x}+\left(  \sigma\beta+\omega\gamma\right)
	\psi_{x}\right)  u_{t}dx+\mu\int_{0}^{\pi}\left(  \sigma\kappa_{1}\varphi
	_{t}+\omega\kappa_{2}\psi_{t}\right)  u_{x}dx
	\]
	satisfies along the solution $\left(  u,\varphi,\psi\right)  ,$ the estimate%
	\begin{align}
		\left(  b\sigma+d\omega\right)  F_{2}^{\prime}\left(  t\right)    & \leq
		-\frac{\mu}{2}\left(  b\sigma+d\omega\right)  ^{2}\int_{0}^{\pi}u_{x}%
		^{2}dx\nonumber\\
		& +m\left(  \int_{0}^{1}\varphi_{x}^{2}dx+\int_{0}^{1}\psi_{x}^{2}dx+\int
		_{0}^{1}\varphi_{t}^{2}dx+\int_{0}^{1}\psi_{t}^{2}dx\right)  .\label{5}%
	\end{align}
	
\end{lemma}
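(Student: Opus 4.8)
The plan is to differentiate $F_2$ and use the wave-speed relations (\ref{H'}) to annihilate the top-order contributions, leaving a term proportional to $-\int_0^\pi u_x^2\,dx$ together with lower-order remainders that are controlled by Young's and Poincar\'e's inequalities. Throughout I write $A_1=\sigma\alpha+\omega\beta$, $A_2=\sigma\beta+\omega\gamma$, $B_1=\sigma\kappa_1$, $B_2=\omega\kappa_2$, so that (\ref{H'}) reads $\rho A_1=\mu B_1$ and $\rho A_2=\mu B_2$.

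First I would compute $F_2'(t)$, which splits into four families of integrals. I would integrate the term $\rho\int_0^\pi(A_1\varphi_{xt}+A_2\psi_{xt})u_t\,dx$ by parts in $x$; the boundary contributions vanish because $u$, and hence $u_t$, vanishes at $x=0,\pi$ under (\ref{A3}). The outcome is $-\rho\int_0^\pi(A_1\varphi_t+A_2\psi_t)u_{xt}\,dx$, which when compared with the term $\mu\int_0^\pi(B_1\varphi_t+B_2\psi_t)u_{xt}\,dx$ cancels identically, since $\mu B_1-\rho A_1=\mu B_2-\rho A_2=0$ by (\ref{H'}). This is the first, and crucial, cancellation.

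Next, in the two surviving groups I would substitute the evolution equations: $\rho u_{tt}=\mu u_{xx}+b\varphi_x+d\psi_x$ in the first, and $\kappa_1\varphi_{tt}$, $\kappa_2\psi_{tt}$ from the porous equations in the second. Integrating the resulting $\mu u_{xx}$ contribution by parts (the boundary terms vanish because $\varphi_x=\psi_x=0$ at $x=0,\pi$) produces exactly $-\mu\int_0^\pi(A_1\varphi_{xx}+A_2\psi_{xx})u_x\,dx$, which cancels the second-order porous terms $\mu\int_0^\pi\bigl(\sigma(\alpha\varphi_{xx}+\beta\psi_{xx})+\omega(\beta\varphi_{xx}+\gamma\psi_{xx})\bigr)u_x\,dx$, again by (\ref{H'}). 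After both cancellations the identity collapses to
\[
F_2'(t)=-\mu(b\sigma+d\omega)\int_0^\pi u_x^2\,dx+\int_0^\pi(A_1\varphi_x+A_2\psi_x)(b\varphi_x+d\psi_x)\,dx+R,
\]
where $R=-\mu\int_0^\pi\bigl[(\sigma\alpha_1+\omega\alpha_3)\varphi+(\sigma\alpha_3+\omega\alpha_2)\psi+(\sigma\tau_1+\omega\tau_3)\varphi_t+(\sigma\tau_2+\omega\tau_4)\psi_t\bigr]u_x\,dx$ collects the remaining cross terms, each of the form $(\cdot)\,u_x$.

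Finally I would multiply through by $(b\sigma+d\omega)$, turning the principal term into $-\mu(b\sigma+d\omega)^2\int_0^\pi u_x^2\,dx$, and then apply Young's inequality to each cross term in $R$ with a parameter chosen small enough that the accumulated $\int_0^\pi u_x^2$ contributions are absorbed into one half of the principal term, which leaves $-\frac{\mu}{2}(b\sigma+d\omega)^2\int_0^\pi u_x^2\,dx$. The first-order remainder is bounded directly by $\int_0^\pi\varphi_x^2\,dx+\int_0^\pi\psi_x^2\,dx$, the $\varphi_t,\psi_t$ cross terms by $\int_0^\pi\varphi_t^2\,dx+\int_0^\pi\psi_t^2\,dx$, while the terms $\int_0^\pi\varphi^2\,dx$ and $\int_0^\pi\psi^2\,dx$ generated by Young's inequality are converted into $\int_0^\pi\varphi_x^2\,dx$ and $\int_0^\pi\psi_x^2\,dx$ via Poincar\'e's inequality, which is available precisely because $\varphi,\psi$ have zero mean (they lie in $H_\ast^1$). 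This yields (\ref{5}). The main point of the argument is engineering the two exact cancellations of the top-order terms; both rely on the specific coefficients built into $F_2$, which is exactly why the hypothesis (\ref{H'}) (equivalently $\chi_0=0$) is imposed. Everything after the cancellations is routine Young/Poincar\'e bookkeeping.
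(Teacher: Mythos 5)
Your proof is correct and takes essentially the same route as the paper: differentiate $F_{2}$, substitute the three equations of (\ref{A}), integrate by parts using the boundary conditions (\ref{A3}), use (\ref{H'}) to annihilate the $u_{xt}\varphi_{t}$ and $u_{xt}\psi_{t}$ terms, then multiply the resulting identity by $\left(b\sigma+d\omega\right)$ and close the estimate with Young's and Poincar\'e's inequalities exactly as the paper does. One cosmetic correction: the second cancellation (of the $\varphi_{xx},\psi_{xx}$ terms against the integrated-by-parts $\mu u_{xx}$ contribution) holds for arbitrary $\sigma,\omega$ purely because the coefficients $\sigma\alpha+\omega\beta$ and $\sigma\beta+\omega\gamma$ are built into $F_{2}$; only the first cancellation actually uses (\ref{H'}).
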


\begin{proof}
	Differentiating $F_{2}\left(  t\right)  ,$ using integration by parts and
	boundary conditions (\ref{A3}) we get%
	\[
	F_{2}^{\prime}\left(  t\right)  =\rho\left(  \frac{\mu\sigma\kappa_{1}}{\rho
	}-\left(  \sigma\alpha+\omega\beta\right)  \right)  \int_{0}^{1}u_{xt}%
	\varphi_{t}dx+\rho\left(  \frac{\mu\omega\kappa_{2}}{\rho}-\left(  \sigma
	\beta+\omega\gamma\right)  \right)  \int_{0}^{1}u_{xt}\psi_{t}dx
	\]%
	\[
	-\mu\left(  b\sigma+d\omega\right)  \int_{0}^{\pi}u_{x}^{2}dx+b\left(
	\sigma\alpha+\omega\beta\right)  \int_{0}^{\pi}\varphi_{x}^{2}dx+d\left(
	\sigma\beta+\omega\gamma\right)  \int_{0}^{\pi}\psi_{x}^{2}dx
	\]%
	\[
	+\left(  \sigma\left(  d\alpha+b\beta\right)  +\omega\left(  d\beta
	+b\gamma\right)  \right)  \int_{0}^{\pi}\varphi_{x}\psi_{x}dx-\mu\left(
	\sigma\alpha_{1}+\omega\alpha_{3}\right)  \int_{0}^{\pi}\varphi u_{x}dx
	\]%
	\[
	-\mu\left(  \sigma\alpha_{3}+\omega\alpha_{2}\right)  \int_{0}^{\pi}\psi
	u_{x}-\mu\left(  \sigma\tau_{1}+\omega\tau_{3}\right)  \int_{0}^{\pi}%
	\varphi_{t}u_{x}dx-\mu\left(  \sigma\tau_{2}+\omega\tau_{4}\right)  \int
	_{0}^{\pi}\psi_{t}u_{x}dx.
	\]
	Thus, estimate (\ref{5}) follows immediately by taking into account
	(\ref{H'}), using Young's and Poincar\'{e}'s inequalities.
\end{proof}

\begin{lemma}
	Along the solution $\left(  u,\varphi,\psi\right)  $ of (\ref{A}), the
	functional%
	\[
	F_{3}\left(  t\right)  =-\rho\int_{0}^{1}u_{t}udx
	\]
	satisfies%
	\begin{equation}
		F_{3}^{\prime}\left(  t\right)  \leq-\rho\int_{0}^{1}u_{t}^{2}dx+2\mu\int
		_{0}^{1}u_{x}^{2}dx+\frac{b^{2}}{2\mu}\int_{0}^{1}\varphi^{2}dx+\frac{d^{2}%
		}{2\mu}\int_{0}^{1}\psi^{2}dx. \label{6}%
	\end{equation}
	
\end{lemma}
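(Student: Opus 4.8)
The plan is to differentiate $F_3$ directly in time and then substitute the first equation of (\ref{A}) to convert the resulting $u_{tt}$ term into spatial derivatives. Differentiating gives
\[
F_3'(t) = -\rho\int_0^1 u_{tt}\,u\,dx - \rho\int_0^1 u_t^2\,dx,
\]
and the first PDE of (\ref{A}), namely $\rho u_{tt} = \mu u_{xx} + b\varphi_x + d\psi_x$, lets me rewrite the first integral as $-\int_0^1\left(\mu u_{xx} + b\varphi_x + d\psi_x\right)u\,dx$. The term $-\rho\int_0^1 u_t^2\,dx$ is already the desired leading dissipative contribution, so the whole problem reduces to handling the three spatial integrals.

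Next I would integrate each of those three integrals by parts, and this is where the boundary conditions (\ref{A3}) are essential: since $u(0,t)=u(\pi,t)=0$, every boundary term that appears, namely $[u_x u]$, $[\varphi u]$ and $[\psi u]$, vanishes. Consequently $-\mu\int_0^1 u_{xx}u\,dx = \mu\int_0^1 u_x^2\,dx$, while $-b\int_0^1 \varphi_x u\,dx = b\int_0^1 \varphi u_x\,dx$ and $-d\int_0^1 \psi_x u\,dx = d\int_0^1 \psi u_x\,dx$. Collecting these yields the exact identity
\[
F_3'(t) = -\rho\int_0^1 u_t^2\,dx + \mu\int_0^1 u_x^2\,dx + b\int_0^1 \varphi u_x\,dx + d\int_0^1 \psi u_x\,dx.
\]

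Finally I would control the two coupling integrals by Young's inequality with weight $\mu$: writing $b\varphi u_x \le \tfrac{\mu}{2}u_x^2 + \tfrac{b^2}{2\mu}\varphi^2$ and $d\psi u_x \le \tfrac{\mu}{2}u_x^2 + \tfrac{d^2}{2\mu}\psi^2$, the two resulting $\tfrac{\mu}{2}\int_0^1 u_x^2\,dx$ terms add to the standing $\mu\int_0^1 u_x^2\,dx$ to produce the coefficient $2\mu$, and the leftover terms are exactly $\tfrac{b^2}{2\mu}\int_0^1\varphi^2\,dx$ and $\tfrac{d^2}{2\mu}\int_0^1\psi^2\,dx$, which is precisely (\ref{6}). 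There is no genuine obstacle here: the estimate is a routine multiplier computation, and the only point requiring care is the correct cancellation of the boundary terms under (\ref{A3}); the specific Young weight $\mu$ is forced by matching the target coefficients in (\ref{6}).
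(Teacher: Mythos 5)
Your proof is correct and follows exactly the route the paper intends: differentiate $F_{3}$, substitute the first equation of (\ref{A}), integrate by parts using $u(0,t)=u(\pi,t)=0$ from (\ref{A3}), and close with Young's inequality with weight $\mu$; the paper compresses all of this into one sentence, and your computation fills it in faithfully.
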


\begin{proof}
	Differentiating $F_{3}\left(  t\right)  $, using integration by parts and
	Young's inequality, estimate (\ref{6}) follows immediately.
\end{proof}

\textbf{End of the proof of Theorem \ref{TH2}}

At this point we define the Lyapunov functional $\mathcal{L}\left(  t\right)
$ as follows%
\[
\mathcal{L}\left(  t\right)  =NE\left(  t\right)  +N_{1}F_{1}\left(  t\right)
+N_{2}\left(  b\sigma+d\omega\right)  F_{2}\left(  t\right)  +F_{3}\left(
t\right)  ,
\]
where $N,N_{1}$ and $N_{2}$ are positive constants to be properly chosen later.

First, we have
\[
\left\vert \mathcal{L}\left(  t\right)  -NE\left(  t\right)  \right\vert \leq
N_{1}\int_{0}^{\pi}\left(  \kappa_{1}\left\vert \varphi_{t}\varphi\right\vert
+\kappa_{2}\left\vert \psi_{t}\psi\right\vert +\frac{\tau_{1}}{2}\left\vert
\varphi\right\vert ^{2}+\frac{\tau_{4}}{2}\left\vert \psi\right\vert
^{2}\right)  dx
\]%
\[
-\frac{\rho}{\mu}\int_{0}^{\pi}\left\vert u_{t}\left(  \int_{0}^{x}\left(
b\varphi+d\psi\right)  \left(  y\right)  dy\right)  \right\vert dx+\rho
\int_{0}^{1}\left\vert u_{t}u\right\vert dx
\]%
\[
+N_{2}\left\vert b\sigma+d\omega\right\vert \int_{0}^{\pi}\left(
\rho\left\vert \sigma\alpha+\omega\beta\right\vert \left\vert u_{t}\varphi
_{x}\right\vert +\rho\left\vert \sigma\beta+\omega\gamma\right\vert \left\vert
u_{t}\psi_{x}\right\vert \right)  dx
\]%
\[
+N_{2}\int_{0}^{\pi}\left(  \mu\kappa_{1}\left\vert b\varphi_{t}%
u_{x}\right\vert +\mu\kappa_{2}\left\vert d\psi_{t}u_{x}\right\vert \right)
dx.
\]
Using Young's, Cauchy Schwarz and Poincar\'{e}'s inequalities, we obtain%
\begin{align*}
	\left\vert \mathcal{L}\left(  t\right)  -NE\left(  t\right)  \right\vert  &
	\leq c_{0}\int_{0}^{1}\left(  u_{t}^{2}+\varphi_{t}^{2}+\psi_{t}^{2}+\left(
	\varphi_{x}+\psi_{x}\right)  ^{2}+\psi_{x}^{2}+\left(  u_{x}+\varphi
	+\psi\right)  ^{2}\right)  dx\\
	&  \leq cE\left(  t\right)  .
\end{align*}
Thus,%
\[
\left(  N-c\right)  E\left(  t\right)  \leq\mathcal{L}\left(  t\right)
\leq\left(  N+c\right)  E\left(  t\right)  .
\]
Secondly, substituting (\ref{B2}),(\ref{4}),(\ref{5}) and (\ref{6}) in the
expression of $\mathcal{L}^{\prime}\left(  t\right)  $ we get%
\begin{align*}
	\mathcal{L}^{\prime}\left(  t\right)   &  \leq-\left[  \frac{1}{2}\left(
	\tau_{1}-\frac{\left(  \tau_{2}+\tau_{3}\right)  ^{2}}{4\tau_{4}}\right)
	N-m_{\delta}N_{1}-mN_{2}\right]  \int_{0}^{1}\varphi_{t}^{2}dx\\
	&  -\left[  \frac{1}{2}\left(  \tau_{4}-\frac{\left(  \tau_{2}+\tau
		_{3}\right)  ^{2}}{4\tau_{1}}\right)  N-m_{\delta}N_{1}-mN_{2}\right]
	\int_{0}^{1}\psi_{t}^{2}dx\\
	&  -\mu\left(  \frac{\left(  \sigma b+\omega d\right)  ^{2}}{2}N_{2}-2\right)
	\int_{0}^{1}u_{x}^{2}dx-\left(  \rho-\delta N_{1}\right)  \int_{0}^{1}%
	u_{t}^{2}dx\\
	&  -\left(  \hat{\alpha}N_{1}-mN_{2}\right)  \int_{0}^{1}\varphi_{x}%
	^{2}dx-\left(  \hat{\gamma}N_{1}-mN_{2}\right)  \int_{0}^{1}\psi_{x}^{2}dx\\
	&  -\frac{1}{2}\left(  \hat{\alpha}_{1}N_{1}-\frac{b^{2}}{\mu}\right)
	\int_{0}^{1}\varphi^{2}dx-\frac{1}{2}\left(  \hat{\alpha}_{2}N_{1}-\frac
	{d^{2}}{\mu}\right)  \int_{0}^{1}\psi^{2}dx.
\end{align*}
Now, we have to choose the coefficients carefully. First, we take
\[
\delta=\frac{\rho}{2N_{1}}.
\]
Secondly, We choose $N_{2}$ large enough such that%
\[
\frac{\left(  \sigma b+\omega d\right)  ^{2}}{2}N_{2}-2>0.
\]
Next, we pick $N_{1}$ large enough such that%
\[
\hat{\alpha}N_{1}-m_{\delta}N_{2}>0,\text{ }\hat{\gamma}N_{1}-m_{\delta}%
N_{2}>0,
\]%
\[
\hat{\alpha}_{1}N_{1}-\frac{b^{2}}{\mu}>0,\,\mbox{and }\,\text{ }\hat{\alpha
}_{2}N_{1}-\frac{d^{2}}{\mu}>0.
\]
Finally, we take $N$ large enough such that $\mathcal{L}\left(  t\right)
\sim$ $E\left(  t\right)  $ (i.e. $N-c>0$) and%
\begin{align*}
	\frac{1}{2}\left(  \tau_{1}-\frac{\left(  \tau_{2}+\tau_{3}\right)  ^{2}%
	}{4\tau_{4}}\right)  N-mN_{1}-m_{\delta}N_{2} &  >0,\\
	\frac{1}{2}\left(  \tau_{4}-\frac{\left(  \tau_{2}+\tau_{3}\right)  ^{2}%
	}{4\tau_{1}}\right)  N-mN_{1}-m_{\delta}N_{2} &  >0.
\end{align*}
Therefore, there exist $\sigma$ and $\widetilde{\sigma}$ positives constants
such that%
\begin{align*}
	\mathcal{L}^{\prime}\left(  t\right)   &  \leq-\sigma\int_{0}^{\pi}\left(
	\varphi_{t}^{2}+\psi_{t}^{2}+u_{t}^{2}+u_{x}^{2}+\varphi_{x}^{2}+\psi_{x}%
	^{2}+\psi^{2}+\varphi^{2}\right)  dx,\\
	&  \leq-\widetilde{\sigma}E\left(  t\right)  ,\text{ \ \ \ \ }\ \forall
	t\geq0.
\end{align*}
Since $E\left(  t\right)  $ is equivalent to $\mathcal{L}\left(  t\right)  ,$
we infer that%
\[
\mathcal{L}^{\prime}\left(  t\right)  \leq-\omega\mathcal{L}\left(  t\right)
,~~\forall t\geq0,
\]
for some positive constant $\omega.$ Thus%
\[
\mathcal{L}\left(  t\right)  \leq\lambda_{1}\mathcal{L}\left(  0\right)
e^{-\omega t},~~\forall t\geq0.
\]
Using again the equivalence between $\mathcal{L}\left(  t\right)  $ and
$E\left(  t\right)  $ we conclude that%
\[
E\left(  t\right)  \leq\lambda e^{-\omega t},~~\forall t\geq0,
\]
which completes the proof of Theorem \ref{TH2}.

\begin{remark}
	The same proof is valid for the following boundary conditions%
	\[%
	\begin{array}
		[c]{cc}%
		\begin{array}
			[c]{c}%
			u_{x}\left(  t,\pi\right)  =\varphi\left(  t,\pi\right)  =\psi\left(
			t,\pi\right)  =0,\\
			u\left(  t,0\right)  =\varphi_{x}\left(  t,0\right)  =\psi_{x}\left(
			t,0\right)  =0,
		\end{array}
		& t\geq0.
	\end{array}
	\]
	
\end{remark}

\section{Lack of exponential decay}

In this section we suppose that (\ref{H}) does not hold, and prove that the
solution $\left(  u,\varphi,\psi\right)  $ of the system (\ref{A}) lacks
exponentially stability. The proof is based on the following theorem due to
Gerhart-Pr\"{u}ss-Huang \cite{Gearhart,Pruss,Huang}.

\begin{theorem}
	Let $S\left(  t\right)  =e^{\mathcal{A}t}$ be a $C_{0}-$semigroup of
	contractions on a Hilbert space $\mathcal{H}$, with infinitesimal generator
	$\mathcal{A}$. Then $S\left(  t\right)  $ is exponentially stable if and only if:
	
	\begin{enumerate}
		\item[$\cdot$] $i%
		\mathbb{R}
		\subset\rho\left(  \mathcal{A}\right)  ,$
		
		\item[$\cdot$] $\underset{\left\vert \lambda\right\vert \longrightarrow\infty
		}{\overline{\lim}}\left\Vert \left(  \lambda I-\mathcal{A}\right)
		^{-1}\right\Vert _{\mathcal{L}\left(  \mathcal{H}\right)  }<\infty.$
	\end{enumerate}
\end{theorem}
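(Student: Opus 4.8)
The plan is to prove the two implications separately; necessity is routine and sufficiency carries all the difficulty. \emph{Necessity.} Suppose $S(t)=e^{\mathcal A t}$ is exponentially stable, say $\|S(t)\|_{\mathcal L(\mathcal H)}\le \lambda e^{-\xi t}$ with $\xi>0$. Then for every $\mu$ with $\operatorname{Re}\mu>-\xi$ the Bochner integral $\int_0^\infty e^{-\mu t}S(t)\,dt$ converges in $\mathcal L(\mathcal H)$ and represents $(\mu I-\mathcal A)^{-1}$. In particular the whole imaginary axis lies in $\rho(\mathcal A)$ and $\|(i\beta I-\mathcal A)^{-1}\|\le \lambda/\xi$ for all $\beta\in\mathbb R$, so both listed conditions hold.

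\emph{Sufficiency: reduction and setup.} Assume $i\mathbb R\subset\rho(\mathcal A)$ and $\limsup_{|\beta|\to\infty}\|(i\beta I-\mathcal A)^{-1}\|<\infty$. First I would upgrade the second condition to a genuine uniform bound $M:=\sup_{\beta\in\mathbb R}\|(i\beta I-\mathcal A)^{-1}\|<\infty$: by the first condition the resolvent is analytic, hence continuous, on $i\mathbb R$, which controls any compact interval, while the $\limsup$ controls the tails. Since $S$ is a semigroup of contractions, $\{\operatorname{Re}\mu>0\}\subset\rho(\mathcal A)$ with $\|(\mu I-\mathcal A)^{-1}\|\le 1/\operatorname{Re}\mu$; combining this with the bound $M$ on $i\mathbb R$ through the Neumann series $(\mu I-\mathcal A)^{-1}=\sum_{k\ge0}\big(-(\mu-i\beta)\big)^k(i\beta I-\mathcal A)^{-(k+1)}$, valid for $|\mu-i\beta|<1/M$, extends the uniform bound to a constant $\widetilde M$ on the closed right half-plane $\{\operatorname{Re}\mu\ge0\}$.

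\emph{Sufficiency: Plancherel and conclusion.} I would then reduce exponential stability to the Datko--Pazy criterion \cite{Pazy}: on a Hilbert space $S$ decays exponentially if and only if $\int_0^\infty\|S(t)x\|^2\,dt<\infty$ for every $x\in\mathcal H$. To produce this bound, fix $x\in\mathcal H$ and $\eta>0$, put $f(t)=e^{-\eta t}S(t)x$ for $t\ge0$ and $f(t)=0$ otherwise; then $f\in L^2(\mathbb R;\mathcal H)$ with Fourier transform $\widehat f(\beta)=\big((\eta+i\beta)I-\mathcal A\big)^{-1}x$, and the Plancherel identity in $L^2(\mathbb R;\mathcal H)$ gives
\[
\int_0^\infty e^{-2\eta t}\|S(t)x\|^2\,dt=\frac1{2\pi}\int_{\mathbb R}\big\|\big((\eta+i\beta)I-\mathcal A\big)^{-1}x\big\|^2\,d\beta .
\]
It remains to bound the right-hand side by $C\|x\|^2$ \emph{uniformly in} $\eta>0$, after which monotone convergence as $\eta\downarrow0$ yields $\int_0^\infty\|S(t)x\|^2\,dt\le C\|x\|^2$ and Datko--Pazy closes the argument.

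\emph{The main obstacle.} The pointwise estimate $\|((\eta+i\beta)I-\mathcal A)^{-1}x\|\le\widetilde M\|x\|$ is non-integrable in $\beta$, hence useless; the uniform-in-$\eta$ control is the genuine core of the theorem and is exactly where the Hilbert structure is indispensable (the statement is false on general Banach spaces). I would obtain it by Pr\"uss's duality device \cite{Pruss,Huang}: pair the frequency integral against an arbitrary $G\in L^2(\mathbb R;\mathcal H)$, transfer the resolvent onto $G$ through $\big(((\eta+i\beta)I-\mathcal A)^{-1}\big)^{*}=\big((\eta-i\beta)I-\mathcal A^{*}\big)^{-1}$ (bounded by $\widetilde M$, since $\mathcal A^{*}$ generates the adjoint contraction semigroup), and then invoke Plancherel for $S^{*}(t)$ together with the Cauchy--Schwarz inequality to rebalance the pairing back into the symmetric quantity. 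This yields $\int_0^\infty\|S(t)x\|^2\,dt\le C\|x\|^2$ for all $x$. I expect this uniformization step to be the hard part; the contraction hypothesis, the half-plane extension, and the Plancherel bookkeeping are all routine by comparison.
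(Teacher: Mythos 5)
Note first that the paper does not prove this theorem at all: it is quoted as a known tool with the citations \cite{Gearhart,Pruss,Huang}, so your attempt can only be measured against the standard proofs in that literature, not against an argument in the paper itself.

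Measured that way, your skeleton is the standard one and most of it is correct: the Laplace-transform necessity argument, the upgrade of the $\limsup$ to a uniform bound $M$ on $i\mathbb{R}$, the Neumann-series extension to a bound $\widetilde M$ on the closed right half-plane, the Plancherel identity for $f(t)=e^{-\eta t}S(t)x$, and the reduction to the Datko--Pazy criterion are all fine. The genuine gap sits exactly at the step you yourself flag as the core, and the ``duality device'' as you describe it does not close it. Write $R(\lambda):=(\lambda I-\mathcal A)^{-1}$. Pairing $\int_{\mathbb R}\langle R(\eta+i\beta)x,G(\beta)\rangle\,d\beta$ against $G\in L^2(\mathbb R;\mathcal H)$ and transferring to the adjoint leaves you with $\int_{\mathbb R}\langle x,\,((\eta-i\beta)I-\mathcal A^*)^{-1}G(\beta)\rangle\,d\beta$. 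The multiplier $\beta\mapsto((\eta-i\beta)I-\mathcal A^*)^{-1}$ is indeed bounded on $L^2(\mathbb R;\mathcal H)$ with norm $\le\widetilde M$ (that is the Plancherel/Hilbert-space step), but the other factor in the pairing is the \emph{constant} function $\beta\mapsto x$, which is not in $L^2(\mathbb R;\mathcal H)$. Cauchy--Schwarz therefore either returns the tautology $\Phi\le\Phi$ when you ``rebalance onto the symmetric quantity,'' or, if you instead estimate the adjoint factor by Plancherel for $S^*$ and contractivity, the divergent bound $(2\eta)^{-1/2}\|x\|\,\|G\|_{L^2}$ --- precisely the $\eta^{-1/2}$ blow-up you started from. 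A telling symptom is that nowhere in your sketch does $\widetilde M$ ever get multiplied against a quantity that is finite in $L^2_\beta$, so the estimate cannot close as written.

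The standard repair is a one-line use of the first resolvent identity to shift from the line $\operatorname{Re}\lambda=\eta$ to the line $\operatorname{Re}\lambda=1$, where square-integrability is free. For $0<\eta\le1$,
\[
R(\eta+i\beta)x=\bigl(I+(1-\eta)\,R(\eta+i\beta)\bigr)R(1+i\beta)x,
\]
hence $\|R(\eta+i\beta)x\|\le(1+\widetilde M)\,\|R(1+i\beta)x\|$, while Plancherel on the line $\operatorname{Re}\lambda=1$ together with $\|S(t)\|\le1$ gives
\[
\int_{\mathbb R}\|R(1+i\beta)x\|^2\,d\beta=2\pi\int_0^\infty e^{-2t}\|S(t)x\|^2\,dt\le\pi\|x\|^2 .
\]
This yields $\int_{\mathbb R}\|R(\eta+i\beta)x\|^2\,d\beta\le(1+\widetilde M)^2\pi\|x\|^2$ uniformly in $\eta\in(0,1]$, after which your monotone-convergence argument and the Datko--Pazy theorem \cite{Pazy} conclude verbatim. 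So: right architecture, correct routine steps, but the decisive uniform-in-$\eta$ estimate is missing and the mechanism you propose for it fails; with the displayed identity substituted for the duality paragraph, the proof is complete.
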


Our result of non exponential stability reads as follow.

\begin{theorem}
	\label{THG}Suppose that (\ref{H}) does not hold, then the energy associated
	with the solution $\left(  u,\varphi,\psi\right)  $ of the system (\ref{A}) is
	not exponentially stable.
\end{theorem}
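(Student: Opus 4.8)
The plan is to apply the Gearhart--Pr\"uss--Huang theorem by showing that its second condition fails: I will construct a sequence of frequencies $\lambda_n \in i\mathbb{R}$ for which the resolvent norm $\|(\lambda_n I - \mathcal{A})^{-1}\|$ blows up. Concretely, I would exhibit a bounded sequence of data $F_n \in \mathcal{H}$ together with solutions $U_n \in D(\mathcal{A})$ of the resolvent equation $(i\lambda_n I - \mathcal{A})U_n = F_n$ such that $\|U_n\|_{\mathcal{H}} \to \infty$. Since the hypothesis (\ref{H}) is being negated, the two cases $\chi_0 \neq 0$ and $\chi_1 = 0$ should be handled by appropriately chosen test sequences.

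First I would write out the resolvent equation $(i\lambda I - \mathcal{A})U = F$ componentwise, eliminating the velocity variables $v = i\lambda u - f_1$, $\phi = i\lambda \varphi - f_3$, $w = i\lambda \psi - f_5$, to obtain a coupled elliptic-type system for $(u,\varphi,\psi)$ with spectral parameter $\lambda$. Exploiting the boundary conditions (\ref{A3}) I would use the natural Fourier basis adapted to the problem: $u$ expanded in $\sin(nx)$ (Dirichlet) and $\varphi,\psi$ in $\cos(nx)$ (Neumann, with zero mean). The plan is to seek solutions of the special form $u_n = A_n \sin(nx)$, $\varphi_n = B_n \cos(nx)$, $\psi_n = C_n \cos(nx)$ and choose $F_n$ supported on a single mode so the PDE reduces to a $3\times 3$ algebraic system in $(A_n, B_n, C_n)$ with entries depending on $n$ and $\lambda$. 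The determinant of this system, as a function of $\lambda$ and $n$, is the key quantity: the frequencies $\lambda_n$ should be chosen so that this characteristic determinant is small (near a root) precisely when (\ref{H}) fails.

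The heart of the argument is the following asymptotic analysis. Setting $\lambda_n = n\sqrt{\mu/\rho}$ (the elastic wave speed, matching the $u$-equation's leading behavior), the principal symbol of the system degenerates, and the solvability condition is governed by $\chi_0$ and $\chi_1$. When $\chi_0 = 0$ but $\chi_1 \neq 0$ the damping is effective and the resolvent stays bounded; when the hypothesis is negated the balance is broken, and I expect $\|U_n\|_{\mathcal{H}} = O(n^k) \to \infty$ while $\|F_n\|_{\mathcal{H}} = O(1)$. I would verify the first condition $i\mathbb{R} \subset \rho(\mathcal{A})$ separately (or note that the blow-up already suffices to deny uniform boundedness of the resolvent on the imaginary axis, which is what fails exponential stability).

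The main obstacle will be the careful asymptotic bookkeeping in the characteristic determinant: identifying the correct scaling of the source amplitudes $F_n$ so that $\|F_n\|_{\mathcal{H}}$ remains bounded while the chosen $\lambda_n$ forces the dominant balance through the degenerate direction associated with $\chi_0$ or $\chi_1$. In particular, distinguishing the mechanism of blow-up in the two sub-cases ($\chi_0 \neq 0$ versus $\chi_1 = 0$) and confirming that the frictional dampings $\tau_i \varphi_t, \tau_i \psi_t$ genuinely fail to compensate at the resonant wave speed requires tracking which component of $U_n$ grows. I expect the $u_t$ (or $u_x$) component to carry the unbounded energy, reflecting the physical fact that, off the resonance condition (\ref{H}), the transmission of dissipation from the porous equations to the elastic equation is obstructed.
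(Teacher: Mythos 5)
Your proposal follows essentially the same route as the paper: both apply the Gearhart--Pr\"uss--Huang criterion, force a single Fourier mode in the elastic equation (data of the form $\sin(nx)$ in the $u$-component), seek modal solutions $u=A\sin(nx)$, $\varphi=B\cos(nx)$, $\psi=C\cos(nx)$ reducing to a $3\times 3$ algebraic system, pick the resonant frequency $\lambda_n=n\sqrt{\mu/\rho}$ annihilating the elastic symbol, and read off the blow-up of $A$ (hence of $u_t$ or $u_x$) from the leading coefficients governed by $\chi_0$ and $\chi_1$ --- exactly the paper's computation of $K_1/K_2$ and its three-case analysis (where, in the sub-case $\chi_0=\chi_1=0$, condition (\ref{V}) guarantees the next-order $\lambda^3$ coefficients do not vanish). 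Your plan is correct; completing it would just require the bookkeeping you already flagged.
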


\begin{proof}
	It suffices to prove that there exists a sequence $\left(  F_{n}\right)
	\subset\mathcal{H}$ with bounded norm $\left\Vert F_{n}\right\Vert <1,$ such
	that
	\begin{equation}\label{100}
	\underset{\left\vert \lambda\right\vert \longrightarrow\infty}{\overline{\lim
	}}\left\Vert \left(  \lambda I-\mathcal{A}\right)  ^{-1}F_{n}\right\Vert
	_{\mathcal{H}}=\underset{\left\vert \lambda\right\vert \longrightarrow\infty
	}{\overline{\lim}}\left\Vert U_{n}\right\Vert _{\mathcal{H}}=\infty.
	\end{equation}
	Let $\left(  U_{n}\right)  _{n\in%
		\mathbb{N}
	}\subset D\left(  \mathcal{A}\right)  $ be the solution of $\left(  \lambda
	I-\mathcal{A}\right)  U_{n}=F_{n}$, then, omitting $n$ we have
	\[%
	\begin{array}
		[c]{rl}%
		i\lambda u+v & =f_{1}\\
		i\lambda\rho v+\mu u_{xx}+b\varphi_{x}+d\psi_{x} & =\rho f_{2}\\
		i\lambda\varphi+\phi & =f_{3}\\
		i\lambda\kappa_{1}\phi+\alpha\varphi_{xx}+\beta\psi_{xx}-bu_{x}-\alpha
		_{1}\varphi-\alpha_{3}\psi-\tau_{1}\phi-\tau_{2}\chi & =\kappa_{1}f_{4}\\
		i\lambda\psi+\chi & =f_{5}\\
		i\lambda\kappa_{2}\chi+\beta\varphi_{xx}+\gamma\psi_{xx}-du_{x}-\alpha
		_{3}\varphi-\alpha_{2}\psi-\tau_{3}\phi-\tau_{4}\chi & =\kappa_{2}f_{6}.
	\end{array}
	\]
	Taking $f_{1}=f_{3}=f_{4}=f_{5}=f_{6}=0$ and $f_{2}=\dfrac{1}{\rho}\sin\left(
	n\pi x\right)  ,$ then eliminating $v,\phi$ and $\chi$ we obtain%
	\[%
	\begin{array}
		[c]{cc}%
		\lambda^{2}\rho u+\mu u_{xx}+b\varphi_{x}+d\psi_{x} & =\sin\left(  n\pi
		x\right) \\
		\lambda^{2}\kappa_{1}\varphi+\alpha\varphi_{xx}+\beta\psi_{xx}-bu_{x}-\left(
		\alpha_{1}-i\lambda\tau_{1}\right)  \varphi-\left(  \alpha_{3}-i\lambda
		\tau_{2}\right)  \psi & =0\\
		\lambda^{2}\kappa_{2}\psi+\beta\varphi_{xx}+\gamma\psi_{xx}-du_{x}-\left(
		\alpha_{3}-i\lambda\tau_{3}\right)  \varphi-\left(  \alpha_{2}-i\lambda
		\tau_{4}\right)  \psi & =0.
	\end{array}
	\]
	Taking into account the boundary conditions (\ref{A3}), we are looking for
	$\left(  u,\varphi,\psi\right)  $ of the form%
	\[
	u=A\sin\left(  n\pi x\right)  ,\,\varphi=B\cos\left(  n\pi x\right)
	,\,\psi=C\cos\left(  n\pi x\right)  .
	\]
	That is%
	\[
	\left\{
	\begin{array}
		[c]{c}%
		\left(  \rho\lambda^{2}-\mu\pi^{2}n^{2}\right)  A-bn\pi B-dn\pi C=1\\
		-b\left(  n\pi\right)  A+\left[  \kappa_{1}\lambda^{2}-\left(  n\pi\right)
		^{2}\alpha-\left(  \alpha_{1}-i\lambda\tau_{1}\right)  \right]  B-\left[
		\beta\left(  n\pi\right)  ^{2}+\left(  \alpha_{3}-i\lambda\tau_{2}\right)
		\right]  C=0\\
		-d\left(  n\pi\right)  A-\left[  \beta\left(  n\pi\right)  ^{2}+\left(
		\alpha_{3}-i\lambda\tau_{3}\right)  \right]  B+\left[  \kappa_{2}\lambda
		^{2}-\left(  n\pi\right)  ^{2}\gamma-\left(  \alpha_{2}-i\lambda\tau
		_{4}\right)  \right]  C=0,
	\end{array}
	\right.
	\]
	which can be written%
	\begin{equation}
		\left(
		\begin{array}
			[c]{ccc}%
			p_{1}\left(  \lambda\right)  & -bn\pi & -dn\pi\\
			-bn\pi & p_{2}\left(  \lambda\right)  & p_{4}\left(  \lambda\right) \\
			-dn\pi & p_{5}\left(  \lambda\right)  & p_{3}\left(  \lambda\right)
		\end{array}
		\right)  \left(
		\begin{array}
			[c]{c}%
			A\\
			B\\
			C
		\end{array}
		\right)  =\left(
		\begin{array}
			[c]{c}%
			1\\
			0\\
			0
		\end{array}
		\right)  \label{G}%
	\end{equation}
	where%
	\[
	p_{1}\left(  \lambda\right)  :=\rho\lambda^{2}-\mu\left(  \pi n\right)
	^{2},\,\,p_{2}\left(  \lambda\right)  :=\kappa_{1}\lambda^{2}-\left(
	n\pi\right)  ^{2}\alpha-\left(  \alpha_{1}-i\lambda\tau_{1}\right)  ,
	\]%
	\[
	p_{3}\left(  \lambda\right)  :=\kappa_{2}\lambda^{2}-\left(  n\pi\right)
	^{2}\gamma-\left(  \alpha_{2}-i\lambda\tau_{4}\right)  ,\;p_{4}\left(
	\lambda\right)  :=-\beta\left(  n\pi\right)  ^{2}-\left(  \alpha_{3}%
	-i\lambda\tau_{2}\right)  ,
	\]%
	\[
	p_{5}\left(  \lambda\right)  :=-\beta\left(  n\pi\right)  ^{2}-\left(
	\alpha_{3}-i\lambda\tau_{3}\right)  .
	\]
	Solving (\ref{G}) we obtain%
	\[
	A=\frac{K_{1}}{p_{1}K_{1}+K_{2}},
	\]
	where,%
	\[
	K_{1}:=p_{2}p_{3}-p_{4}p_{5},\;K_{2}:=b(n\pi)^{2}\left(  dp_{4}-bp_{3}\right)
	-d\left(  n\pi\right)  ^{2}\left(  dp_{2}-bp_{5}\right)  .
	\]
	Let $\lambda$ be such that $p_{1}\left(  \lambda\right)  =0,$ then $\left(
	n\pi\right)  ^{2}=\dfrac{\rho\lambda^{2}}{\mu}$ and
	\[
	K_{1}=\frac{\rho}{\mu}\left[  \left(  \frac{\mu\kappa_{1}}{\rho}%
	-\alpha\right)  \left(  \frac{\mu\kappa_{2}}{\rho}-\gamma\right)  -\beta
	^{2}\right]  \lambda^{4}%
	\]%
	\[
	+i\frac{\rho}{\mu}\left[  \left(  \frac{\mu\kappa_{1}}{\rho}-\alpha\right)
	\tau_{4}+\left(  \frac{\mu\kappa_{2}}{\rho}-\gamma\right)  \tau_{1}%
	+\beta\left(  \tau_{2}+\tau_{3}\right)  \right]  \lambda^{3}+K_{3},
	\]
	that is
	\[K_1=\frac{\rho}{\mu}\chi_0\lambda^4+
	+i\frac{\rho}{\mu}\left[  \left(  \frac{\mu\kappa_{1}}{\rho}-\alpha\right)
	\tau_{4}+\left(  \frac{\mu\kappa_{2}}{\rho}-\gamma\right)  \tau_{1}%
	+\beta\left(  \tau_{2}+\tau_{3}\right)  \right]  \lambda^{3}+K_{3}%
	\]
	and
	\[
	K_{2}=-\frac{\rho^{2}}{\mu^{2}}\left[  b^{2}\left(  \frac{\mu\kappa_{2}}{\rho
	}-\gamma\right)  +d^{2}\left(  \frac{\mu\kappa_{1}}{\rho}-\alpha\right)
	+2bd\beta\right]  \lambda^{4}%
	\]%
	\[
	+i\dfrac{\rho}{\mu}\left[  bd\left(  \tau_{2}+\tau_{3}\right)  -b^{2}\tau
	_{4}-d^{2}\tau_{1}\right]  \lambda^{3}+K_{4}%
	\]
	that is
	\[
	K_2=-\frac{\rho^{2}}{\mu^{2}}\chi_{1}\lambda^4+i\dfrac{\rho}{\mu}\left[  bd\left(  \tau_{2}+\tau_{3}\right)  -b^{2}\tau
	_{4}-d^{2}\tau_{1}\right]  \lambda^{3}+K_{4}%
	\]
	where $K_{3},K_{4}$ are polynomials of degree $2$ in $\lambda.$
	
	At this point we discuss three cases:
		\begin{description}
		\item[1)] Suppose that $\chi_{0}\neq0$ and $\chi_{1}\neq0,$then
		\[
		A=\frac{K_{1}}{K_{2}}\approx\frac{\mu\chi_0  }{-\rho\chi_1 }\equiv c,
		\]
		for some constant $c\neq0.$
		
		\item[2)] Suppose that $\chi_{0}=\chi_{1}=0,$ then%
		\[
		\frac{\mu\kappa_{1}}{\rho}-\alpha=-\frac{b\beta}{d},\hspace{0.05in}\frac
		{\mu\kappa_{2}}{\rho}-\gamma=-\frac{d\beta}{d}.
		\]
		Consequently
		\[
		\left(  \frac{\mu\kappa_{1}}{\rho}-\alpha\right)  \tau_{4}+\left(  \frac
		{\mu\kappa_{2}}{\rho}-\gamma\right)  \tau_{1}+\beta\left(  \tau_{2}+\tau
		_{3}\right)  \neq0,
		\]
		\[
		bd\left(  \tau_{2}+\tau_{3}\right)  -b^{2}\tau_{4}-d^{2}\tau_{1}\neq0,
		\]
		by virtue of (\ref{V}) and%
		\[
		A=\frac{K_{1}}{K_{2}}\approx\frac{\left[  \left(  \frac{\mu\kappa_{1}}{\rho
			}-\alpha\right)  \tau_{4}+\left(  \frac{\mu\kappa_{2}}{\rho}-\gamma\right)
			\tau_{1}+\beta\left(  \tau_{2}+\tau_{3}\right)  \right]  }{\left[  bd\left(
			\tau_{2}+\tau_{3}\right)  -b^{2}\tau_{4}-d^{2}\tau_{1}\right]  }\equiv c.
		\]
	\end{description}
		Therefore,
		\[
		\left\Vert U\right\Vert ^{2}\geq\rho\left\Vert v\right\Vert ^{2}=\rho
		c^{2}\left\vert \lambda\right\vert ^{2}\int_{0}^{1}\sin^{2}\left(  n\pi
		x\right)  dx=\frac{\rho c^{2}\left\vert \lambda\right\vert ^{2}}{2},
		\]
		and consequently,
		\[
		\underset{\left\vert \lambda\right\vert \longrightarrow\infty}{\lim}\left\Vert
		U\right\Vert ^{2}=\infty.
		\]
		
	\begin{description}	
		\item[3)] Suppose that $\chi_{0}\neq0$ and $\chi_{1}=0,$ then%
		\[
		A=\frac{K_{1}}{K_{2}}\approx\frac{\chi_{0}  \lambda}{-i\left[  bd\left(  \tau_{2}+\tau_{3}\right)  -b^{2}%
			\tau_{4}-d^{2}\tau_{1}\right]  }\approx c\lambda,
		\]%
		\[
		\left\Vert U\right\Vert ^{2}\geq\left\Vert u_{x}\right\Vert ^{2}=A^{2}\left(
		n\pi\right)  ^{2}\int_{0}^{1}\cos^{2}\left(  n\pi x\right)  dx=\frac
		{c^{2}\left\vert \lambda\right\vert ^{4}\mu}{2\rho}%
		\]
		and
		\[
		\underset{\left\vert \lambda\right\vert \longrightarrow\infty}{\lim}\left\Vert
		U\right\Vert ^{2}=\infty.
		\]
		
	\end{description}
	Therefore, in all cases (\ref{100}) holds and consequently, the proof of Theorem \ref{THG} is completed.
\end{proof}



\begin{thebibliography}{99}                                                                                               %
	
	
	\bibitem {Apalara}T. Apalara, Exponential decay in one dimensional porous
	dissipation elasticity, \emph{Q. J. Mech. Appl. Math.},  {\bf70} (4) (2017) 363--372.
	
	\bibitem{Apalara1} Apalara, T., General decay of solutions in one-dimensional porous-elastic system with memory, J. Math. Anal. Appl.,  469 (2) 2019: 457--471.
	
	\bibitem {Barenblatt1} G.I. Barenblatt,  I.P. Zheltov, I. N. Kockina,  Basic
	concepts in the theory of seepage of homogeneous liquids in fissured rocks,
	\emph{(Strata), Prikl. Makh. (English translation)}, {\bf 24} (1960) 1286--1303.
	
	\bibitem {Barenblatt2} G.I. Barenblatt, I. P. Zheltov, On the Basic
	Equations of Seepage of Homogeneous Liquids in Fissured Rock, \emph{Akad. Nauk SSSR (English Translation)}, {\bf 132} (1960) 545--548,.
	
	\bibitem {Bazarra}N. Bazarra,   J.R. Fernandez, M. C. Leseduarte, A. Maga\~{n}a, R. Quintanilla, On the thermoelasticity with two porosities: asymptotic 	behavior, \emph{Math. Mech. Solids}, {\bf 24} (9) (2019) 2713--2725.
	
	\bibitem {Bazarra1} N.  Bazarra,  J. R. Fernandez,  M. C. Leseduarte, A. Maga\~{n}a,
	R. Quintanilla, On the linear thermoelasticity with two porosities: numerical
	aspects, \emph{Int. J. num. anal. Model.}, {\bf 17} (2) (2019) 172--194.
	
	\bibitem {Casas1}P.S. Casas, R.  Quintanilla,  Exponential decay in
	one-dimensional porous-thermo-elasticity, \emph{Mech. Resea. Comm.}, {\bf 32}
	(2005) 652--658.
	\bibitem{Casas2}  P.S. Casas, R. Quintanilla, Exponential stability in thermoelasticity with microtemperatures, \emph{Inter. J. Engin. Sci.},  {\bf 43} (2005) 33--47.
	\bibitem {Cowin}  S. C. Cowin,  J. W. Nunziato, Linear elastic materials with
	voids, \emph{J. Elasticity}, {\bf 13} (1983) 125--147.
	
	
	\bibitem {Gearhart} L. Gearhart, Spectral theory for contraction semigroups in
	Hilbert spaces, \emph{Trans. Amer. Math. Soc.}, {\bf 236} (1978) 385--394.
	
	\bibitem {Huang}  F.L. Huang, Characteristic condition for exponential
	stability of linear dynamical systems in Hilbert spaces, \emph{Ann. Diff. Eqs.}
	{\bf 1} (1) (1985) 43--56.
	
	\bibitem {Quintanilla Iesan}D. Ie\c{s}an, R. Quintanilla, On the theory of 	thermoelastic materials with double porosity structure, \emph{J. Thermal Stresses}, {\bf 37} (2014) 1017--1036.
	
	\bibitem {Magana Quintanilla} A. Maga\~{n}a,  R. Quintanilla, On the time decay
	of solutions in one-dimensional theories of porous materials, \emph{Int. J. Solids
	Struct.}, {\bf 43} (2006) 3414--3427.
	
	\bibitem {Nemsi}  A. Nemsi, A. Fareh, Exponential decay of the solution of a
	double porous elastic system,\emph{ U.P.B. Sci. Bull., Series A},  {\bf 83} (1) (2021) 41--50.
	
	\bibitem {Pazy}A. Pazy,  \emph{Semigroups of linear operators and
	applications to partial differential equations}, 1st ed. (Springer-Verlag, New York, 1983).
	
	\bibitem {Pruss} J. Pr\"{u}ss,  On the spectrum of C$_{0}-$semigroups, \emph{Trans.
	AMS}, {\bf 284} (1984: 847--857.
	
	\bibitem {Quintanilla} R. Quintanilla, Slow decay for one dimensional porous
	dissipation elasticity, \emph{Applied Mathematics Letters} {\bf 16} (2003) 487--491.
	
	\bibitem {Vrabie}I.I. Vrabie,  \emph{C$_{0}$-semigroups and applications}, (Elsevier
	Science B.V., Amesterdam, 2003).
\end{thebibliography}
\end{document}